\newcommand{\Z}{\mathbb{Z}}
\newcommand{\C}{\mathbb{C}}
\newcommand{\F}{\mathcal{F}}
\newcommand{\B}{\mathcal{B}}
\newcommand{\ord}{\text{ord}}
\newtheorem{theorem}{Theorem}[section]
\newtheorem{example}[theorem]{Example}
\newtheorem{remark}[theorem]{Remark} 
\newtheorem{lemma}[theorem]{Lemma}
\newtheorem{proposition}[theorem]{Proposition}
\newtheorem{corollary}[theorem]{Corollary}
\newtheorem{definition}[theorem]{Definition}
\newcommand{\mr}[1]{\ensuremath{\mathrm{#1}}}
\newcommand{\mc}[1]{\ensuremath{\mathcal{#1}}}
\newcommand{\mb}[1]{\ensuremath{\mathbb{#1}}}
\title{Indices of holomorphic foliations and the bifurcation conjecture}
\date{\today}
\author[M. Falla Luza]{Maycol Falla Luza}
\address[M. Falla Luza]{Instituto de Matem\'atica e Estat\'istica, Universidade Federal Fluminense, Rua Professor Marcos Waldemar de Freitas Reis, s/n, CEP 24210-201\\
Bloco H, Campus do Gragoat\'a - Niter\'oi - RJ, Brasil}
\email{hfalla@id.uff.br}
\author[A. Fern\'andez-P\'erez]{Arturo Fern\'andez-P\'erez}
\address[A. Fern\'andez-P\'erez]{Department of Mathematics, Federal University of Minas Gerais, Av. Ant\^onio Carlos, 6627, CEP 31270-901\\
Pampulha - Belo Horizonte - Brazil}
\email{fernandez@ufmg.br}
\author[D. Mar\'in]{David Mar\'in}
\address[D. Mar\'in]{Departament de Matem\`atiques, Universitat Aut\`onoma de Barcelona, Edifici C, 08193 Cerdanyola del Vall\`es, Barcelona, Spain}
\email{david.marin@uab.cat}
\author[R. Rosas]{Rudy Rosas}
\address[R. Rosas]{Departamento de Ciencias, Secci\'on Matem\'aticas, Pontificia Universidad Cat\'olica del Per\'u, Av. Universitaria 1801, San Miguel, Lima, Per\'u}
\email{rudy.rosas@pucp.edu.pe}
\subjclass[2020]{Primary 32S65 - 32M25}
\keywords{Holomorphic foliations, Milnor number, Multiplicity of a foliation along a divisor of separatrices} 
\thanks{ The first author acknowledges support from CNPq Projeto Universal 408687/2023-1 ``Geometria das Equações Diferenciais Algébricas''
The second author acknowledges support from CNPq Projeto Universal 408687/2023-1 "Geometria das Equa\c{c}\~oes Diferenciais Alg\'ebricas" and CNPq-Brazil PQ-306011/2023-9.
The third author acknowledges support from 
the Ministry of Science, Innovation and Universities of Spain through the grant 
PID2021-125625NB-I00  and from the Agency for Management of University and Research Grants of Catalonia through the grant 2021SGR01015.
The fourth author is partially supported by the Vicerrectorado de Investigación de la Pontificia Universidad Católica del Perú.}
\begin{document}

\begin{abstract}
In this paper, we revisit local invariants (G\'omez-Mont-Seade-Verjovsky,  variation, Camacho-Sad and  Baum-Bott indices) associated with singular holomorphic foliations on $(\C^2,0)$ and we provide
semi-global
formulas for them in terms
of the reduction of singularities of the foliation.  A key technical
ingredient is the Cholesky-type factorization of the intersection matrix of the exceptional
divisor, which allows for an explicit control of multiplicities and indices along the resolution
process. Using this factorization, we express the Milnor number and other indices as quadratic
forms in intersection vectors associated to balanced divisors introduced by Y. Genzmer.
As a main application, we address a conjecture posed by A. Szawlowski concerning pencils
of plane holomorphic germs. We prove that the excess of Milnor numbers along the pencil
is precisely captured by the invariants derived from our formulas, thereby confirming the
conjecture in full generality. This also yields a new expression for the dimension of the parameter
space of universal unfoldings of meromorphic functions in the sense of T. Suwa.
\end{abstract}

\maketitle

\tableofcontents
\section{Introduction}

Holomorphic foliations on complex surfaces, particularly on $(\mathbb{C}^2, 0)$, have been
intensively studied due to their deep connections with singularity theory, complex dynamics,
and algebraic geometry. 
Since the pioneering works of Camacho, Lins Neto, and Sad, 
various local invariants have been introduced to describe and classify singularities of 
foliations. Among them, the Milnor number, the Camacho-Sad index, 
the Gómez-Mont-Seade-Verjovsky (GSV) index, the variation index, 
and the Baum-Bott index play a fundamental role.

In this paper, we revisit these invariants and provide \emph{semi-global} formulas for them 
in terms of the numerical data given by the reduction of singularities of the foliation. This perspective not only rederives several known results 
(cf.~\cite{Brunella,FP-GB-SM2021,Arturo}) but also extends them to more general settings. A key technical ingredient is the Cholesky-type factorization of the 
self-intersection matrix of the exceptional divisor, 
which allows for an explicit control of multiplicities and indices along the 
resolution process. Using this factorization, we express the Milnor number and 
other indices as quadratic forms in intersection vectors associated to balanced divisors 
introduced by Genzmer~\cite{Genzmer}.

As a main application, we address a conjecture posed by Szawlowski~\cite{Sa} 
concerning pencils of plane holomorphic germs. Following \cite{Sa}, given a pair of germs $f, g \in \mathcal O_{\C^2,0}$  we define the \textbf{\textit{Milnor number of the pair}} as
$$
\mu (f,g) :=i_0(f_xg-g_xf, f_yg - g_yf).
$$
Let \( \mathcal{F}_{f/g} \) be the germ of the foliation defined by \( \omega = g\,df - f\,dg \). It is clear that if \( 0 \) is an isolated singularity of \( \omega \), then \( \mu(f, g) = \mu_0(\mathcal{F}_{f/g}) \), where \(\mu_0(\mathcal{F}_{f/g}) \) denotes the Milnor number of the foliation \( \mathcal{F}_{f/g} \). 
Assume that \( f \) and \( g \) are non-trivial, coprime germs at the origin of \( \mathbb{C}^2 \). It is well known that the function
\[
\mu : \mathbb{P}^1 \to \mathbb{N}_{\geq 0} \cup \{\infty\}, \quad [\alpha:\beta] \mapsto \mu_0(\alpha f + \beta g)
\]
is upper semicontinuous with respect to the Zariski topology. Therefore, there exists a finite subset \( \mathcal{B}(f,g) \subset \mathbb{P}^1 \), called the \textit{\textbf{bifurcation set}} of the pencil, such that $\mu$ is constant on \( \mathbb{P}^1 \setminus \mathcal{B}(f,g) \). This constant value is referred to as the \textit{\textbf{generic Milnor number}} and is denoted by \( \mu_{\mathrm{gen}}(f,g) \). Moreover, for each element of \( \mathcal{B}(f,g) \), the Milnor number is strictly greater than \( \mu_{\mathrm{gen}}(f,g) \). Szawlowski in \cite[Section 4]{Sa} conjectured that there is a relationship between $\mu(f,g)$ and the ``\textit{total excess}''  of $\mu$ on the bifurcation set. Precisely, he proposed the following \textbf{\textit{Bifurcation Formula}:}
$$
\mu(f,g)=\mu_0(fg)+\sum_{[\alpha:\beta]\in\mathcal{B}(f,g)^{*}}\left(\mu_0(\alpha f +\beta g)-\mu_{gen}(f,g)\right),
$$
where $\mathcal{B}(f,g)^{*}:=\mathcal{B}(f,g)\setminus\{0,\infty\}$.
\par Szawlowski verified the formula in some examples and proved its validity if $f$ or $g$ is smooth \cite[Proposition~4.4]{Sa}. In this work, we provide two different proofs of the Bifurcation formula using the theory of indices of foliations developed in the paper.

This yields a new expression for the dimension of the parameter space of universal unfoldings 
of meromorphic functions in the sense of Suwa~\cite{Suwa} and Bodin-Pichon~\cite{Bodin-Pichon}.

The structure of the paper is as follows: 
In Section~2 we derive formulas for multiplicities and the Milnor number from desingularization data. 
Section~3 is devoted to the computation of several indices of foliations using these tools. 
In Section~4 we apply our results to prove the bifurcation formula for pencils and discuss 
its consequences for semitame functions and unfoldings. 





\section{Multiplicities and Milnor number from  desingularization data}
Let $\F$ be a holomorphic foliation on $(\C^2,0)$. A germ divisor $\displaystyle \B=\sum\limits_{B\in \operatorname{Sep}(\mathcal{F})} a_{B}\, B$, $a_B\in\{-1,0,1\}$ is called \textbf{balanced} if it satisfies the following conditions:
\begin{enumerate}[(a)]
\item if $B$ is an isolated separatrix of $\F$ then $a_B=1$, 
\item if $a_B=-1$ then $B$ is a non-isolated (dicritical) separatrix of $\F$,
\item for each non-invariant (dicritical) component $D$ of the exceptional divisor $E$ of the reduction $\pi$ of singularities of $\F$ we have $\sum_{B}a_B \overline{B}\cdot D=2-\mr{val}_D$, where $\mr{val}_D$ is the valence of $D$, i.e. the number of irreducible components of $\overline{E\setminus D}$ meeting $D$, and $\overline{B}$ is the strict transform of $B$.
\end{enumerate}
Notice that a non-dicritical foliation has a unique balanced divisor which is the sum of the isolated separatrix. In contrast, a dicritical foliation have infinitely many balanced divisors.

Let 
\[
\pi = \pi_n \circ \cdots \circ \pi_1 : (M, E) \longrightarrow (\mathbb{C}^2, 0)
\]
be a composition of $n$ blow-ups. In what follows we define a combinatorial data associated to $\F$ with respect to $\pi$.  
Denote by $E_1, \dots, E_n$ the irreducible components of the exceptional divisor 
\[
E = \pi^{-1}(0),
\]
and let $A = (A_{ij})$ be the self-intersection matrix of $E$, where 
\[
A_{ij} = E_i \cdot E_j, \qquad i,j = 1, \dots, n.
\]
Moreover, associated with $\pi$ we define a sequence of matrices $A_1, A_2, \dots, A_n = A$, where, for each $j$, the matrix $A_j$ denotes the self-intersection matrix of the exceptional divisor of $\pi_j \circ \cdots \circ \pi_1.$ We also consider the following column vectors:
\[
S_{\mathcal{B}} = 
\begin{pmatrix}
\overline{\mathcal{B}} \cdot E_1 \\
\vdots \\
\overline{\mathcal{B}} \cdot E_n
\end{pmatrix}, 
\qquad
u = 
\begin{pmatrix}
1 \\
\vdots \\
1
\end{pmatrix},
\qquad
\iota = 
\begin{pmatrix}
\iota_1 \\
\vdots \\
\iota_n
\end{pmatrix},
\]
where $\overline{\mathcal{B}}$ denotes the strict transform of $\mathcal{B}$ by $\pi$, and 
\[
\iota_j = 
\begin{cases}
1, & \text{if } E_j \text{ is } \pi^* \mathcal{F}\text{-invariant}, \\
0, & \text{otherwise}.
\end{cases}
\] 
and let  $\delta = u-\iota$ be the vector corresponding to the dicritical components. We define a sequence of matrices $F_1, F_2, \dots, F_n$ associated with the sequence of blow-ups.  
Each matrix $F_k$ is a $k \times k$ lower triangular matrix with $1$'s along the diagonal.  
We start with $F_1 = (1),$ and for $k \geq 2$, we define
\[
F_{k} =
\begin{pmatrix}
F_{k-1} & 0 \\
- e_k & 1
\end{pmatrix},
\]
where the row vector $e_k = (e_{k,1}, \dots, e_{k,k-1})$ is given by
\[
e_{k,j} =
\begin{cases}
1, & \text{if the $k$-th blow-up point lies on the divisor } E_j, \\
0, & \text{otherwise}.
\end{cases}
\]
Denote by $F=F_n$. The matrix $F$ is given explicitly by
\[
F = 
\begin{pmatrix}
1 & 0 & 0 & \cdots & 0 \\
-1 & 1 & 0 & \cdots & 0 \\
\vdots & \vdots & \ddots & \ddots & \vdots \\
- E_{n-1}^{\,n-1} \!\cdot\! E_1^{\,n-1} & - E_{n-1}^{\,n-1} \!\cdot\! E_2^{\,n-1} & \cdots & 1 & 0 \\
- E_n^{\,n} \!\cdot\! E_1^{\,n} & - E_n^{\,n} \!\cdot\! E_2^{\,n} & \cdots & - E_n^{\,n} \!\cdot\! E_{n-1}^{\,n} & 1
\end{pmatrix},
\]
where $E_1^i, \dots, E_{i-1}^i$ denote the strict transforms of the components of 
\[
(\pi_{i-1} \circ \cdots \circ \pi_1)^{-1}(0) 
= E_1^{\,i-1} \cup \cdots \cup E_{i-1}^{\,i-1},
\]
and $E_i^i$ denotes the exceptional divisor of $\pi_i$, for $i=1,\dots,n$. We will need some auxiliary results.
 
\begin{lemma} \label{Cholesky}
The self-intersection matrix $A$ of the exceptional divisor $E$ admits the Cholesky-type factorization
\[
A = - F^{\mathsf{T}} F,
\]
where $F$ is the lower triangular matrix defined above.
\end{lemma}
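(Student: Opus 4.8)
The plan is to prove the factorization $A = -F^{\mathsf T} F$ by induction on the number of blow-ups $n$, tracking how the self-intersection matrix $A_k$ and the triangular matrix $F_k$ evolve under a single blow-up. The base case $n=1$ is immediate: a single blow-up produces one exceptional component $E_1^1$ with self-intersection $-1$, so $A_1 = (-1) = -(1)(1) = -F_1^{\mathsf T}F_1$. For the inductive step, I would assume $A_{k-1} = -F_{k-1}^{\mathsf T}F_{k-1}$ and analyze the effect of the $k$-th blow-up $\pi_k$ centered at a point $p$ lying on some subset of the existing components, recorded precisely by the row vector $e_k$.

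\smallskip

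First I would recall the two standard intersection-theoretic facts governing a blow-up at a point $p$. If $E_j$ is an existing component, its strict transform satisfies $(E_j^{\,k})^2 = (E_j^{\,k-1})^2 - e_{k,j}$, since the self-intersection drops by one exactly when $p \in E_j$. Second, the new exceptional divisor $E_k^k$ has self-intersection $-1$ and meets $E_j^{\,k}$ transversally in one point precisely when $p \in E_j$, so $E_k^k \cdot E_j^{\,k} = e_{k,j}$; off-diagonal intersections among the older components are unchanged by taking strict transforms. In matrix terms, writing $A_{k-1} = (a_{ij})$, the new matrix has the block form
\[
A_k =
\begin{pmatrix}
A_{k-1} - \operatorname{diag}(e_k) & e_k^{\mathsf T} \\
e_k & -1
\end{pmatrix},
\]
where $\operatorname{diag}(e_k)$ subtracts $e_{k,j}$ from the $(j,j)$ entry and $e_k^{\mathsf T}$ is the column of new intersection numbers.

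\smallskip

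The heart of the argument is then a direct block computation confirming that this $A_k$ equals $-F_k^{\mathsf T}F_k$ with $F_k = \begin{pmatrix} F_{k-1} & 0 \\ -e_k & 1\end{pmatrix}$. Multiplying out gives
\[
-F_k^{\mathsf T}F_k =
\begin{pmatrix}
-F_{k-1}^{\mathsf T}F_{k-1} - e_k^{\mathsf T}e_k & e_k^{\mathsf T} \\
e_k & -1
\end{pmatrix}.
\]
The off-diagonal blocks and the bottom-right entry match immediately, using the inductive hypothesis for the term $-F_{k-1}^{\mathsf T}F_{k-1} = A_{k-1}$. The only point requiring care is the top-left block: I must check that $-e_k^{\mathsf T}e_k$ agrees with the correction $-\operatorname{diag}(e_k)$. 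This is where the combinatorics is genuinely used: because each $e_{k,j} \in \{0,1\}$, the matrix $e_k^{\mathsf T}e_k$ has entries $e_{k,i}e_{k,j}$, so its diagonal is $e_{k,j}^2 = e_{k,j}$ while its off-diagonal entries $e_{k,i}e_{k,j}$ are generally nonzero when $p$ lies on two distinct components $E_i,E_j$ simultaneously.

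\smallskip

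I expect this last mismatch to be the main obstacle, and resolving it is precisely where the geometry enters. The naive diagonal correction $\operatorname{diag}(e_k)$ accounts only for the drop in self-intersections, but when the blow-up center $p$ is a corner point lying on two components $E_i$ and $E_j$, blowing up separates them: their strict transforms no longer meet, so $E_i^{\,k}\cdot E_j^{\,k} = E_i^{\,k-1}\cdot E_j^{\,k-1} - 1$. Thus the correct update to the top-left block is not merely diagonal but subtracts $e_{k,i}e_{k,j}$ from every entry $(i,j)$ with $p \in E_i \cap E_j$ — which is exactly the full matrix $e_k^{\mathsf T}e_k$. Incorporating this refined intersection rule, the top-left block of $A_k$ is $A_{k-1} - e_k^{\mathsf T}e_k = -F_{k-1}^{\mathsf T}F_{k-1} - e_k^{\mathsf T}e_k$, matching $-F_k^{\mathsf T}F_k$ exactly and closing the induction. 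I would therefore state the single-blow-up intersection rule carefully at the outset, emphasizing both the self-intersection drop and the separation of components meeting at the center, since the whole factorization hinges on the identity $e_{k,j}^2 = e_{k,j}$ together with this geometric separation phenomenon.
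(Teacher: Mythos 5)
Your proposal is correct and follows essentially the same route as the paper: induction on the number of blow-ups, the block factorization $-F_k^{\mathsf T}F_k=\left(\begin{smallmatrix}-F_{k-1}^{\mathsf T}F_{k-1}-e_k^{\mathsf T}e_k & e_k^{\mathsf T}\\ e_k & -1\end{smallmatrix}\right)$, and the observation that blowing up a corner both drops the two self-intersections and separates the two components, so the top-left correction is exactly the rank-one matrix $e_k^{\mathsf T}e_k$. The only caveat is that your intermediate displayed block form with $\operatorname{diag}(e_k)$ is incorrect when the center is a corner, but you identify and repair this yourself in the final paragraph, landing on precisely the paper's argument.
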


\begin{proof}
By induction on $n$. The case $n=1$ is obvious. Assume now that the result holds for $n-1$ and let us prove it for $n$. We have that

 \begin{equation}\label{Fn}
 F_n=\left(\begin{array}{cc}F_{n-1} & 0\\ -e_n & 1\end{array}\right)\quad\text{and}\quad F_n^{-1}=\left(\begin{array}{cc}F_{n-1}^{-1} &0\\[1mm] e_nF_{n-1}^{-1} & 1\end{array}\right),
 \end{equation}
where $e_n=(E_n\cdot E_1,\ldots,E_n\cdot E_{n-1})$. 
Then
\[-F_n^{\mathsf{T}}F_n=\left(\begin{array}{cc}-F_{n-1}^{\mathsf{T}}& e_n^{\mathsf{T}}\\ 0 & -1 \end{array}\right)\left(\begin{array}{cc} F_{n-1} & 0\\ -e_n & 1\end{array}\right)=\left(\begin{array}{cc}-F_{n-1}^{\mathsf{T}}F_{n-1}-e_n^{\mathsf{T}}e_n & e_n^{\mathsf{T}}\\ e_n & -1 \end{array}\right)=A_n\]
by the inductive hypothesis $-F_{n-1}^{\mathsf{T}}F_{n-1}=A_{n-1}$ and the fact 
that the  entries of the matrix $e_n^{\mathsf{T}}e_n$ are $1$ in the positions $(i,i)$, $(i,j)$, $(j,i)$ and $(j,j)$ and $0$ otherwise, where we are assuming that the center of the last blow-up $\pi_n$ is $E_i^{n-1}\cap E_j^{n-1}$. If the last center is not a corner the computation is similar.
\end{proof}

\begin{lemma}\label{aux1}
Let  $L$ and $B$ be curves through the origin with algebraic multiplicity $1$ and $a$ respectively. 
Let $\pi$ be the blow-up of the origin, $E=\pi^{-1}(0)$ and $\bar L$ and $\bar B$  the strict transforms of $L$ and $B$ respectively. Then $\pi^*B=\bar B+aE$, $\pi^*L=\bar L+E$, $\bar B\cdot E=a$ and $L\cdot B-a=\bar L\cdot\bar B$.
\end{lemma}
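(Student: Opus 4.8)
The plan is to reduce all four identities to two standard features of a single blow-up: the local structure of the total transform of a curve, and the invariance of intersection numbers under pullback (the projection formula). All the intersection numbers appearing are the local intersection multiplicities at the relevant points of $E$, which are well defined since $L$ and $B$ share no common branch and $\pi$ is a proper birational morphism that is an isomorphism away from $E$.

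First I would establish the pullback decompositions $\pi^*B=\bar B+aE$ and $\pi^*L=\bar L+E$ by a direct computation in charts. Writing $B=\{f=0\}$ with $f$ having lowest-order homogeneous part of degree $a$, the chart $\pi(x,t)=(x,xt)$ gives $f\circ\pi=x^a\,g(x,t)$, where $\{x=0\}=E$ and $g$ defines the strict transform $\bar B$; the computation in the complementary chart produces the same coefficient $a$ of $E$. Applying this with $a=1$ to $L$ yields $\pi^*L=\bar L+E$, and in particular $\bar L\cdot E=1$. This proves the first two assertions.

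Next I would record that the exceptional divisor of a single blow-up satisfies $E^2=-1$. To obtain $\bar B\cdot E=a$, I apply the projection formula $\pi^*B\cdot E=B\cdot\pi_*E$; since $E$ is contracted to the origin we have $\pi_*E=0$, whence $\pi^*B\cdot E=0$. Substituting $\pi^*B=\bar B+aE$ gives $0=\bar B\cdot E+aE^2=\bar B\cdot E-a$, so $\bar B\cdot E=a$. For the last identity I use invariance of the intersection number under pullback in the form $L\cdot B=\pi^*L\cdot\pi^*B$, and expand
\[
\pi^*L\cdot\pi^*B=(\bar L+E)\cdot(\bar B+aE)=\bar L\cdot\bar B+a(\bar L\cdot E)+\bar B\cdot E+aE^2=\bar L\cdot\bar B+a+a-a,
\]
using $\bar L\cdot E=1$, $\bar B\cdot E=a$ and $E^2=-1$. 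This gives $L\cdot B=\bar L\cdot\bar B+a$, which is exactly the claimed $L\cdot B-a=\bar L\cdot\bar B$.

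I expect the only delicate point to be the chart-by-chart verification of $\pi^*B=\bar B+aE$, namely checking that the multiplicity of $E$ in the total transform equals the algebraic multiplicity $a$ of $B$ at the origin regardless of whether the center is a smooth point or later a corner; everything else is a formal consequence of $E^2=-1$, $\pi_*E=0$, and the projection formula.
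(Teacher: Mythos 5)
Your proof is correct and follows essentially the same route as the paper: both derive $\bar B\cdot E=a$ from $\pi^*B\cdot E=0$ and then expand $L\cdot B=\pi^*L\cdot\pi^*B=(\bar L+E)\cdot(\bar B+aE)$ using $E^2=-1$. The only difference is that you also supply the chart computation justifying $\pi^*B=\bar B+aE$, which the paper takes for granted; your final arithmetic $\bar L\cdot\bar B+a+a-a=\bar L\cdot\bar B+a$ is the correct one and matches the statement.
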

\begin{proof} We have that $L\cdot B=\pi^*L\cdot\pi^* B=(\bar L+E)\cdot(\bar B+aE)=\bar L\cdot \bar B+a\bar L\cdot E+\bar B\cdot L+aE\cdot E=\bar L\cdot \bar B-a$ because $0=\pi^*L\cdot E=\bar L\cdot E-1$ and $0=\pi^*B\cdot E=\bar B\cdot E-a$.
\end{proof}

\begin{lemma}\label{aux} 
 $F^{\mathsf{T}}u=2u+(A_{11},\ldots,A_{nn})^{\mathsf{T}}$.
 \end{lemma}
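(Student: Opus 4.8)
The plan is to verify the identity one coordinate at a time, showing that the $j$-th entry of $F^{\mathsf{T}}u$ equals $2+A_{jj}$. Since $u=(1,\dots,1)^{\mathsf{T}}$, the $j$-th coordinate of $F^{\mathsf{T}}u$ is nothing but the sum of the entries in the $j$-th column of $F$:
\[
(F^{\mathsf{T}}u)_j=\sum_{i=1}^n F_{ij}.
\]
Because $F$ is lower-triangular with $1$'s on the diagonal, only the diagonal term and the terms with $i>j$ survive, so I would first rewrite this as $(F^{\mathsf{T}}u)_j = 1+\sum_{i>j}F_{ij}$, where each off-diagonal entry is $F_{ij}=-E_i^{\,i}\cdot E_j^{\,i}$.

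The key observation I would isolate next is that these sub-diagonal entries take values only in $\{0,-1\}$. Indeed, $E_i^{\,i}$ is the exceptional divisor produced by a single point blow-up; it meets the strict transform of any component passing through its center transversally in exactly one point and is disjoint from the rest. Hence $E_i^{\,i}\cdot E_j^{\,i}\in\{0,1\}$ and $F_{ij}\in\{0,-1\}$, which in particular yields the elementary but crucial identity $F_{ij}^2=-F_{ij}$ for every $i>j$.

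With this in hand, the cleanest route is to feed the Cholesky factorization of Lemma~\ref{Cholesky} into the computation of the diagonal of $A$. From $A=-F^{\mathsf{T}}F$ one reads off
\[
A_{jj}=-(F^{\mathsf{T}}F)_{jj}=-\sum_{i=1}^n F_{ij}^2=-1-\sum_{i>j}F_{ij}^2=-1+\sum_{i>j}F_{ij},
\]
using $F_{jj}=1$ and $F_{ij}^2=-F_{ij}$. Comparing with the column-sum expression above gives $(F^{\mathsf{T}}u)_j=1+\sum_{i>j}F_{ij}=1+(A_{jj}+1)=2+A_{jj}$, as required. Alternatively, one can argue geometrically: $\sum_{i>j}F_{ij}$ is minus the number of later blow-ups centered on $E_j$, and by Lemma~\ref{aux1} each such blow-up lowers the self-intersection of $E_j$ by one from its initial value $-1$, so that $\sum_{i>j}F_{ij}=A_{jj}+1$ directly.

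The only genuinely substantive point is establishing that the sub-diagonal entries of $F$ lie in $\{0,-1\}$; everything after that is a one-line manipulation. I therefore expect that step—pinning down the transversality and multiplicity of the newly created exceptional component against the earlier strict transforms—to be the main thing to get right, although it is already implicit in the intersection-theoretic computations underlying Lemma~\ref{aux1} and Lemma~\ref{Cholesky}.
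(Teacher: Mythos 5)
Your proof is correct, but it takes a genuinely different route from the paper. The paper argues by induction on $n$, using the block decomposition $F_n=\left(\begin{smallmatrix}F_{n-1}&0\\-e_n&1\end{smallmatrix}\right)$ to compute $(u^n)^{\mathsf{T}}F_n=((u^{n-1})^{\mathsf{T}}F_{n-1}-e_n,1)$ and then invoking the fact that the $n$-th blow-up decreases $A_{jj}$ by $e_{n,j}$ (this is essentially your ``alternative geometric argument'' at the end, run as an induction). Your main argument instead is a closed-form computation: you read off the column sums of $F$, observe that the subdiagonal entries lie in $\{0,-1\}$ (which is in fact immediate from the paper's combinatorial definition $F_{ij}=-e_{i,j}$ with $e_{i,j}\in\{0,1\}$, so the transversality discussion you flag as the ``substantive point'' is already built into the definition), and then exploit the identity $F_{ij}^2=-F_{ij}$ together with Lemma~\ref{Cholesky} to get $A_{jj}=-1+\sum_{i>j}F_{ij}$. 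Since Lemma~\ref{Cholesky} is established before Lemma~\ref{aux} and its proof does not rely on it, there is no circularity. What your approach buys is a shorter, induction-free derivation that makes transparent exactly which structural features of $F$ are used (lower-triangular, unit diagonal, subdiagonal entries in $\{0,-1\}$); what the paper's approach buys is that it tracks how the diagonal of $A$ evolves under each blow-up, a bookkeeping pattern reused in the proofs of Lemma~\ref{Cholesky} and Theorem~\ref{ell}.
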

 \begin{proof}
It follows by induction on $n$ using (\ref{Fn}). The case $n=1$ being obvious, let us consider the inductive step:
 \begin{align*}
(u^n)^{\mathsf{T}}F_n&=((u^{n-1})^{\mathsf{T}},1)\left(\begin{array}{cc}
 F_{n-1} & 0\\ -e_n & 1\end{array}\right)=((u^{n-1})^{\mathsf{T}}F_{n-1}-e_n,1)\\
 &=(2+A_{11}^{n-1}-e_{n,1},\ldots,2+A_{n-1,n-1}^{n-1}-e_{n,n-1},2-1)\\
 &=(2+A_{11}^n,\ldots,2+A_{n-1,n-1}^n,2+A_{nn}^n).
 \end{align*}
 \end{proof}
 
\begin{lemma}\label{bal}
If $\B$ is balanced then $\langle \delta,S_{\B}+(A-F^{\mathsf{T}})u\rangle=0$.
\end{lemma}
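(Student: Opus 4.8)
The plan is to evaluate the vector $v := S_{\B} + (A - F^{\mathsf{T}})u$ coordinate by coordinate and to exploit that $\delta = u - \iota$ is the indicator vector of the dicritical components. Since $\delta_j = 1$ precisely when $E_j$ is non-invariant and $\delta_j = 0$ otherwise, the inner product $\langle \delta, v\rangle$ only sees the entries $v_j$ attached to dicritical indices. Thus it suffices to prove that $v_j = 0$ for every $j$ with $E_j$ dicritical, after which the whole sum collapses to $0$ automatically.

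First I would compute $(F^{\mathsf{T}}u)_j$ directly from Lemma~\ref{aux}: since $F^{\mathsf{T}}u = 2u + (A_{11},\dots,A_{nn})^{\mathsf{T}}$, we get $(F^{\mathsf{T}}u)_j = 2 + A_{jj}$. Next I would compute $(Au)_j = \sum_k A_{jk} = E_j \cdot \big(\textstyle\sum_k E_k\big)$, separating the diagonal term from the off-diagonal ones as $A_{jj} + \sum_{k\neq j} A_{jk}$. Because distinct exceptional components of a composition of blow-ups meet transversally in at most one point, each off-diagonal entry $A_{jk}=E_j\cdot E_k$ is $0$ or $1$, so the off-diagonal row sum counts exactly the irreducible components of $\overline{E\setminus E_j}$ meeting $E_j$; this is the valence $\mathrm{val}_{E_j}$. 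Hence $(Au)_j = A_{jj} + \mathrm{val}_{E_j}$.

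Subtracting the two expressions gives $((A - F^{\mathsf{T}})u)_j = (A_{jj} + \mathrm{val}_{E_j}) - (2 + A_{jj}) = \mathrm{val}_{E_j} - 2$, an expression in which the self-intersections cancel. For a dicritical component $E_j$, condition (c) in the definition of a balanced divisor reads $(S_{\B})_j = \overline{\B}\cdot E_j = 2 - \mathrm{val}_{E_j}$. Adding the two contributions yields $v_j = (2 - \mathrm{val}_{E_j}) + (\mathrm{val}_{E_j} - 2) = 0$ for every dicritical $j$, whence $\langle \delta, v\rangle = \sum_{j\ \mathrm{dicritical}} v_j = 0$.

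The argument is essentially a bookkeeping exercise combining Lemma~\ref{aux} with the definition of balancedness, so I do not anticipate a genuine obstacle. The one point demanding care is matching conventions: I must confirm that the combinatorial valence $\mathrm{val}_{E_j}$ of condition (c) coincides exactly with the off-diagonal row sum $\sum_{k\neq j}A_{jk}$, which hinges on the off-diagonal intersection numbers being $0/1$ incidence indicators in the dual graph of $E$. Once this identification is secured, one sees that the balanced condition is designed precisely to make $S_{\B}$ cancel the universal term $\mathrm{val}_{E_j}-2$ along dicritical components, which is the conceptual content of the lemma.
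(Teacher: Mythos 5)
Your proposal is correct and is essentially the paper's own argument: both identify $\mathrm{val}_{E_j}$ with the off-diagonal row sum $\sum_{k\neq j}A_{jk}$, invoke Lemma~\ref{aux} to write $(F^{\mathsf{T}}u)_j=A_{jj}+2$, and then apply condition (c) of balancedness at each dicritical index. The only cosmetic difference is that you show each dicritical coordinate of $S_{\B}+(A-F^{\mathsf{T}})u$ vanishes before summing, whereas the paper sums the componentwise identities $\delta_i(S_{\B,i}-2+\mathrm{val}_{E_i})=0$ directly.
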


\begin{proof}
Notice that if $\varepsilon_i$ is the $i$-th element of the canonical basis of $\Z^n$ we have that the valence of $E_i$ is
\[\mr{val}_{E_i}=A_{1i}+\cdots+A_{i-1,i}+A_{i+1,i}+\cdots+A_{ni}=\langle A\varepsilon_i,u\rangle-A_{ii}=\langle A\varepsilon_i,u\rangle -\langle A\varepsilon_i,\varepsilon_i\rangle=\langle A\varepsilon_i,u-\varepsilon_i\rangle.\]
Condition (c) for a balanced divisor $\B$ is equivalent to $\delta_i(S_{\B,i}-2+\langle A\varepsilon_i,u-\varepsilon_i\rangle)=0$ for each $i=1,\ldots,n$.
Summing up all these equalities we obtain
\[0=\langle \delta,S_{\B}\rangle+\sum_{i=1}^n\delta_i(-2+\langle A \varepsilon_i,u-\varepsilon_i\rangle)=\langle\delta, S_{\B}\rangle+\sum_{i=1}^n\delta_i(\langle \varepsilon_i,Au\rangle-(A_{ii}+2))=\langle\delta,S_{\B}+Au-F^{\mathsf{T}}u\rangle,\]
thanks to Lemma~\ref{aux}, asserting that $A_{ii}+2=(F^{\mathsf{T}}u)_i=\langle e_i, F^{\mathsf{T}}u\rangle$.
\end{proof}
Recall that a foliation is said to be of \textbf{second class} if none of the singularities of $\pi^*\mathcal{F}$ are tangent saddle-nodes; that is, whenever saddle-nodes appear in the reduction, their strong separatrix is transverse to the exceptional divisor. If no saddle-nodes appear in the reduction we say that $\F$ is a \textbf{generalized curve}.
The balanced divisors characterize second class foliations:
\begin{proposition}[\cite{Genzmer}, Proposition 2.6]\label{Genzmer}
Let $\F$ be a foliation germ on $(\C^2,0)$ with balanced divisor $\B$. Then $\F$ is of second class if and only if $\nu_0(\F)=\nu_0(\B)-1$.
\end{proposition}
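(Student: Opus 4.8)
The plan is to prove both implications at once by controlling the nonnegative defect $\epsilon(\F):=\nu_0(\F)-\nu_0(\B)+1$ and showing that $\epsilon(\F)=0$ precisely when no tangent saddle-node occurs in the reduction. I would argue by induction on the number $n$ of blow-ups in the reduction $\pi=\pi_n\circ\cdots\circ\pi_1$, which forces a semi-local reformulation: at every infinitely near point $q\in E$ I record the germ $\F_q=(\pi^*\F,q)$ together with the branches of the exceptional divisor through $q$, counting each invariant exceptional branch as an isolated separatrix (coefficient $+1$) and leaving the dicritical ones to be governed by condition (c). Both numbers entering $\epsilon$ are then accessible from the reduction data: $\nu_0(\B)=\sum_B a_B\,\nu_0(B)$ is turned into a sum of intersection numbers via Lemma~\ref{aux1}, which gives $\nu_0(B)=\bar B\cdot E$ for the first blow-up and, applied inductively, distributes $\nu_0(\B)$ as local intersection multiplicities along $E$, while $\nu_0(\F)$ is read from the tangent cone at each stage.

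For the inductive step I would examine a single blow-up $\sigma$ centered at $0$, with $\nu:=\nu_0(\F)$ and tangent cone $P_{\nu+1}=x a_\nu+y b_\nu$, and compare the two descents. In the non-dicritical case $P_{\nu+1}\not\equiv 0$ the divisor $E$ is $\sigma^*\F$-invariant, the singular points $q$ correspond to the zeros of $P_{\nu+1}$ in $\mathbb{P}^1$, and the total tangency of $\sigma^*\F$ with $E$ equals $\nu+1$; combining this with $\nu_0(B)=\bar B\cdot E=\sum_q(\bar B\cdot E)_q$ from Lemma~\ref{aux1}, the tangency of the strict transform of $\B$ with $E$ cancels the drop in the foliation multiplicity, so that $\epsilon$ descends additively, $\epsilon(\F)=\sum_{q\in E}\epsilon(\F_q)$. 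In the dicritical case $P_{\nu+1}\equiv 0$ the divisor $E$ is not invariant, and the point is that condition (c), in the algebraic form $\langle\delta,S_{\B}+(A-F^{\mathsf T})u\rangle=0$ provided by Lemma~\ref{bal}, supplies exactly the correction needed for the local balanced multiplicities to add up, again giving additivity of $\epsilon$.

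The base case is a reduced singularity $q$ lying on $E$, where $\nu_q(\F_q)=1$ and $\epsilon$ must be computed by hand from the local tangency of $\F_q$ with the exceptional branch through $q$. If $q$ is nondegenerate its two separatrices are smooth and the configuration is that of a generalized curve, so $\epsilon=0$. If $q$ is a saddle-node the computation bifurcates according to the position of its strong separatrix: when the strong separatrix is transverse to $E$ the contribution to $\epsilon$ still vanishes, whereas when it is contained in $E$ (the tangent case) the mismatch between the foliation's tangency with $E$ and the separatrix data produces a strictly positive integer contribution. Summing the additive identity over the whole reduction yields $\epsilon(\F)=\sum_q\epsilon_q$, the sum being over the tangent saddle-nodes, which is $0$ if and only if no tangent saddle-node appears, i.e. if and only if $\F$ is of second class; this is the desired equivalence.

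I expect the main obstacle to be precisely this local saddle-node analysis: one must show, independently of whether the weak separatrix converges, that a non-tangent saddle-node contributes $0$ and a tangent one contributes a strictly positive amount, so that $\epsilon$ genuinely localizes on the tangent saddle-nodes. A secondary but delicate point is the semi-local bookkeeping of the exceptional branches—consistently inserting the invariant components as separatrices of the successive germs and invoking condition (c) through Lemma~\ref{bal} in the dicritical case—so that the descent of $\epsilon$ is exactly additive with no hidden boundary term. Once additivity and the base-case dichotomy are in place, both directions of the equivalence follow at once.
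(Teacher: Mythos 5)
First, a point of comparison: the paper does not prove this statement at all — it is imported verbatim from Genzmer (\cite{Genzmer}, Proposition 2.6) — so there is no internal proof to measure your argument against. Your overall strategy (induct on the blow-ups of the reduction and show that the defect $\epsilon(\F)=\nu_0(\F)-\nu_0(\B)+1$ is a sum of nonnegative contributions supported on the tangent saddle-nodes) is the right shape and is essentially how the result is established in the literature.

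However, the two steps that carry all the weight are asserted rather than proved, and the first is not correct as stated. (1) The exact additivity $\epsilon(\F)=\sum_{q\in E}\epsilon(\F_q)$ after one non-dicritical blow-up is not a formal consequence of Lemma~\ref{aux1}. That lemma gives $\nu_0(\B)=\overline{\B}\cdot E=\sum_q(\overline{\B}\cdot E)_q$, whereas the contribution of a singular point $q$ to $\nu_0(\F)+1$ is the multiplicity of $q$ as a zero of the degree-$(\nu+1)$ tangent cone on $E$, i.e.\ the tangency order of $\pi_1^*\F$ with $E$ at $q$. Additivity of $\epsilon$ therefore needs the identity $\mathrm{tang}_q(\pi_1^*\F,E)=\nu_q(\F_q)+\bigl((\overline{\B}\cdot E)_q-\nu_q(\overline{\B})\bigr)$ at every $q$, and this is exactly what fails at points whose resolution produces tangent saddle-nodes: the excess tangency of the foliation with $E$ is not accounted for by the separatrix divisor there. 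So the defect does not ``descend additively and then get computed in the base case''; it already leaks in the descent step, and the induction must carry a correction term (in Genzmer's formula the saddle-node excess is weighted by reduction data, not by a purely local constant, which is incompatible with unweighted additivity all the way down). (2) Your base case hides the known hard point, which you only flag as an expected obstacle: at a tangent saddle-node the separatrix transverse to $E$ is the weak one and may be purely formal, hence absent from $\B$, so the claimed strictly positive contribution must be established separately in the convergent and divergent cases. Finally, invoking Lemma~\ref{bal} in the dicritical step presupposes that the divisor induced by $\B$ at each infinitely near point (with the invariant exceptional branches adjoined) is again balanced for the local germ — a fact the paper itself uses without proof in Theorem~\ref{ell}. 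As written, the proposal is a plan with the decisive estimates missing.
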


Let us define  the vector  $\ell=(\ell_1,\ldots,\ell_n)^{\mathsf{T}}$  of multiplicities (or discrepancies) of $\pi^*\F$ along each component of $E$ as follows. If $\pi_j$ is the blow up of a point $p_{j-1}$  and $\omega_j$ is a $1$-form defining the foliation around $p_{j-1}$ then $\ell_j$ is the vanishing order of $\pi_j^*(\omega_j)$ along $E_j$. In fact, it is easy to see that $\ell_j=\nu_{p_{j-1}}((\pi_{j-1}\circ\cdots\circ\pi_1)^*\F)+1-\iota_j$. \\

In this context, Genzmer's Proposition give us an expression for $\ell_1$ as $\ell_1 = \nu_0(\B)- \iota_1$. The following theorem 
is a generalization of 
this formula.

\begin{theorem}\label{ell}
Let $\F$ be  a second class foliation on $(\C^2,0)$ with balanced divisor $\B$. Then
$$\ell=(F^{-1})^{\mathsf{T}}S_{\B}-F\iota $$
\end{theorem}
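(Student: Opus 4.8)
The plan is to separate the identity into two logically independent pieces: a purely intersection-theoretic computation of the multiplicities of the divisor $\B$ viewed as a formal curve, and a foliated input obtained by applying Proposition~\ref{Genzmer} not only at the origin but at every center $p_{k-1}$ of the reduction. The two matrix vectors that mediate between them are the coefficients of the total transform of $\B$ and the local multiplicities of $\B$ at the successive centers.

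First I would carry out the combinatorial computation. Write the total transform as $\pi^{*}\B=\overline{\B}+\sum_{k=1}^{n}\mathbf{m}_{k}E_{k}$, so that $\mathbf{m}_{k}$ is the coefficient of $E_{k}$, and set $m_{k}=\sum_{B}a_{B}\,\nu_{p_{k-1}}(\overline{B}_{k-1})$ for the signed multiplicity of the strict transform of $\B$ at the $k$-th center. Tracking a single blow-up, where $\pi_{k}^{*}E_{j}^{k-1}=E_{j}^{k}+e_{k,j}E_{k}^{k}$ (a direct extension of Lemma~\ref{aux1}), gives the recursion $\mathbf{m}_{k}=m_{k}+\sum_{j<k}e_{k,j}\mathbf{m}_{j}$, which in matrix form reads $m=F\mathbf{m}$ since the $(k,j)$-entry of $F$ is $-e_{k,j}$ for $j<k$. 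On the other hand, because each $E_{i}$ is contracted to a point, the projection formula yields $\pi^{*}\B\cdot E_{i}=0$, which expands to $(S_{\B})_{i}+(A\mathbf{m})_{i}=0$, i.e. $A\mathbf{m}=-S_{\B}$. Invoking the factorization $A=-F^{\mathsf{T}}F$ of Lemma~\ref{Cholesky} then gives $\mathbf{m}=-A^{-1}S_{\B}=F^{-1}(F^{-1})^{\mathsf{T}}S_{\B}$, and hence
\[
m=F\mathbf{m}=(F^{-1})^{\mathsf{T}}S_{\B}.
\]

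Next I would feed the definition $\ell_{k}=\nu_{p_{k-1}}(\F_{k-1})+1-\iota_{k}$ into Proposition~\ref{Genzmer} applied to the germ $\F_{k-1}$ at the center $p_{k-1}$. The balanced divisor of this germ is $\mathcal{C}_{k-1}$, formed by the local branches of $\overline{\B}_{k-1}$ through $p_{k-1}$ together with the invariant exceptional components $E_{j}^{k-1}$ ($j<k$) passing through $p_{k-1}$, each with coefficient $+1$. Since such components are smooth, $\nu_{p_{k-1}}(\mathcal{C}_{k-1})=m_{k}+\sum_{j<k}e_{k,j}\iota_{j}$, and the definition of $F$ gives $\sum_{j<k}e_{k,j}\iota_{j}=\iota_{k}-(F\iota)_{k}$. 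Proposition~\ref{Genzmer} provides $\nu_{p_{k-1}}(\F_{k-1})=\nu_{p_{k-1}}(\mathcal{C}_{k-1})-1$, so
\[
\ell_{k}=\nu_{p_{k-1}}(\mathcal{C}_{k-1})-\iota_{k}=m_{k}-(F\iota)_{k},
\]
and assembling this with the first part yields $\ell=m-F\iota=(F^{-1})^{\mathsf{T}}S_{\B}-F\iota$.

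The step I expect to be the main obstacle is the justification that Proposition~\ref{Genzmer} may be applied verbatim at each intermediate center, that is, that the germ $\F_{k-1}$ is again of second class and that the restriction of the global balanced divisor $\B$ (its strict transform together with the invariant exceptional components through $p_{k-1}$) is genuinely a balanced divisor for $\F_{k-1}$. This amounts to checking that the signs $a_{B}$ and the valence condition (c) localize correctly to the sub-reduction $\pi_{n}\circ\cdots\circ\pi_{k}$ lying above $p_{k-1}$, and that the dicritical exceptional components through $p_{k-1}$ contribute no invariant separatrix to $\mathcal{C}_{k-1}$. Once this localization is in place, the rest reduces to the elementary matrix identities above together with Lemma~\ref{aux1}.
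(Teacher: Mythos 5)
Your argument is correct and reaches the formula by a genuinely different route. The paper proves the theorem by a single induction on the number of blow-ups: it tracks how $S_{\B}$ changes under the last blow-up (via Lemma~\ref{aux1}), exploits the block structure of $F_n$ and $F_n^{-1}$, and applies Proposition~\ref{Genzmer} only at the newest center to identify the last entry $\ell_n'=a+\iota_{n-2}+\iota_{n-1}-\iota_n$ with $\ell_n$. You instead decouple the identity into $\ell=m-F\iota$ (the foliated input, obtained by applying Genzmer at \emph{every} center) and $m=(F^{-1})^{\mathsf{T}}S_{\B}$ (pure intersection theory). Your derivation of the latter via the projection formula $\pi^{*}\B\cdot E_i=0$, giving $A\mathbf{m}=-S_{\B}$ and then $m=F\mathbf{m}=(F^{-1})^{\mathsf{T}}S_{\B}$ through the Cholesky factorization, is in effect a cleaner proof of the paper's later Lemma~\ref{multiplicidades de curva} (which the paper establishes by a separate induction on the recursion for $S_C$), and it gives a conceptual meaning to the two vectors appearing in the statement: $-A^{-1}S_{\B}$ as the total-transform coefficients and $(F^{-1})^{\mathsf{T}}S_{\B}$ as the multiplicities at the centers. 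The one point you flag as the main obstacle --- that the strict transform of $\B$ together with the invariant exceptional components through $p_{k-1}$, each with coefficient $+1$, is a balanced divisor for the germ of $\F_{k-1}$ at $p_{k-1}$, and that this germ is again of second class --- is exactly the assertion the paper also uses (for the last center in each inductive step) without further justification, so your proof carries the same, explicitly acknowledged, debt and no additional one. What you lose relative to the paper is only that the block-matrix computation there simultaneously verifies the compatibility of the data $(F_n,S_{\B}^n,\iota^n)$ with $(F_{n-1},S_{\B}^{n-1},\iota^{n-1})$, which in your setup is absorbed into the (easy) recursions $\mathbf{m}_k=m_k+\sum_{j<k}e_{k,j}\mathbf{m}_j$ and $(F\iota)_k=\iota_k-\sum_{j<k}e_{k,j}\iota_j$.
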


\begin{proof}
We proceed by induction on $n$. 
The case $n=1$ follows, using that $\overline{\B}\cdot E_1=\nu_0(\B)$ and $\ell_1=\nu_0(\F)+1-\iota_1$, from Proposition~\ref{Genzmer}.

 Assume now that the result holds for $n-1$ and let us prove it for $n$.
 
If the center of the last blow-up is a corner $p$, up to reordering the components of $E$, we can assume that $e_n=(0,\ldots,0,1,1)$.
We claim that
 \begin{equation}\label{SBn}
 S_{\B}^n=\left(\begin{array}{c}S_{\B}^{n-1}\\ 0\end{array}\right)+a\left(\begin{array}{c}0\\ \vdots\\ 0 \\ -1 \\ -1 \\ 1\end{array}\right),
 \end{equation}
 where $a=\nu_p(\overline{\B}_{n-1})$ is the algebraic multiplicity of the strict transform of $\B$ by $\pi_{n-1}\circ\cdots\circ\pi_1$ at $p$. The equalities of the coordinates other than $n-2$ and $n-1$ are clear.
To see that $\overline{\B}_n\cdot E_{n-i}^n=\overline{\B}_{n-1}\cdot E_{n-i}^{n-1}-a$ for $i=1,2$ we apply Lemma~\ref{aux1} to $L=E_{n-i}$ and $B=\overline{\B}_{n-1}$.
Using (\ref{SBn}) and the inductive hypothesis we deduce that
  \begin{align*}
 (F_n^{-1})^{\mathsf{T}}S_{\B}^n-F_n\iota^n=&\left(\begin{array}{cc}
 (F_{n-1}^{-1})^{\mathsf{T}}& (F_{n-1}^{-1})^{\mathsf{T}}e_n^{\mathsf{T}}\\ 0 & 1
 \end{array}\right)\left(\begin{array}{c}S_{\B}^{n-1}\\ 0\end{array}\right)
 \\&
 +a
 \left(\begin{array}{cc}
 (F_{n-1}^{-1})^{\mathsf{T}} & (F_{n-1}^{-1})^{\mathsf{T}}e_n^{\mathsf{T}}\\ 0 & 1
 \end{array}\right)\left(\begin{array}{c}0\\ \vdots\\ 0 \\ -1 \\ -1 \\ 1\end{array}\right)
 -\left(\begin{array}{cc}F_{n-1} & 0\\ -e_n & 1\end{array}\right)\left(\begin{array}{c} \iota^{n-1}\\ \iota^n_n\end{array}\right)\\
 =&\left(\begin{array}{c} (F_{n-1}^{-1})^{\mathsf{T}}S_{\B}^{n-1}-F_{n-1}\iota^{n-1}\\ \iota_{n-2}^{n-1}+\iota_{n-1}^{n-1}-\iota_n^n\end{array}\right)+\left(\begin{array}{c}0\\ \vdots\\ 0\\ a\end{array}\right)=\left(\begin{array}{c}\ell_1\\ \vdots\\ \ell_{n-1}\\ \ell'_n\end{array}\right).
 \end{align*}
Let us see  that $\ell_n':=a+\iota_{n-2}+\iota_{n-1}-\iota_n$ coincides with $\ell_n$. 
 Since $\overline{\B}_{n-1}+\iota_{n-2}E_{n-2}^{n-1}+\iota_{n-1}E_{n-1}^{n-1}$ is  a balanced divisor for the germ of the foliation $\F_{n-1}:=(\pi_{n-1}\circ\cdots\circ\pi_1)^*\F$ at $p$, by applying again Genzmer's formula in Proposition~\ref{Genzmer} we obtain that 
 \[\ell_n=\nu_p(\F_{n-1})+1-\iota_n=\nu_p(\overline{\B}_{n-1}+\iota_{n-2}E_{n-2}^{n-1}+\iota_{n-1}E_{n-1}^{n-1})-\iota_n=a+\iota_{n-2}+\iota_{n-1}-\iota_n=\ell_n'.\]
 
In case where the center $p$ of the last blow-up is a regular point of $E$  we can assume that $e_n=(0,\ldots,0,1)$ and by applying again Lemma~\ref{aux} we obtain that
 \[S_{\B}^n=\left(\begin{array}{c}S_{\B}^{n-1}\\ 0\end{array}\right)+a\left(\begin{array}{c}0\\ \vdots\\ 0  \\ -1 \\ 1\end{array}\right).\] 
 The computations made in the case of a corner can be easily adapted to this simpler situation.
 \end{proof}

\begin{remark}
Using Seifert-Van Kampen theorem, if $\pi$ is a desingularization
of a reduced germ of curve $C:f_1\cdots f_m=0$ then 
the fundamental group of the complement of $C$ in a Milnor ball $\mb B$ is
generated by loops $\gamma_K$, where $ K$ varies in the set of irreducible components of $\pi^{-1}(C)=\bar C_1\cup\cdots\cup\bar C_m\cup E_1\cup\cdots E_n$, with the following relations
\[\prod_K\gamma_K^{K\cdot E_i}=\gamma_{E_i}^{E_i\cdot E_i}\prod_{K\neq E_i} \gamma_{K}^{K\cdot E_i}=1,\quad i=1,\ldots,n.\]
Then $H_1(\mb B\setminus C,\Z)=\bigoplus_{K}\Z c_K/\langle \sum_K K\cdot E_i c_K,i=1,\ldots,n\rangle=\Z c_1\oplus\cdots\oplus\Z c_m$, where $c_i=c_{\bar C_i}$ is a cycle of index $1$ around $f_i=0$ and $0$ around $f_j=0$ if $j\neq i$.
Thus 
\[(c_{E_1},\ldots, c_{E_n})A=-\sum_{i=1}^m(\bar C_i\cdot E_1,\ldots,\bar C_i\cdot E_n)c_i\]
and
\[(c_{E_1},\ldots, c_{E_n})=-\sum_{i=1}^m(\bar C_i\cdot E_1,\ldots,\bar C_i\cdot E_n)A^{-1}c_i.\]
Since $\frac{1}{2\pi\sqrt{-1}}\int_{c_i}\frac{d\pi^*f_i}{\pi^* f_i}=\frac{1}{2\pi\sqrt{-1}}\int_{\pi(c_i)}\frac{df_i}{f_i}=1$,
the  vector of vanishing orders of $\pi^*f_i$ along each $E_j$ is 
\[M_{f_i}=\left(\frac{1}{2\pi\sqrt{-1}}\int_{c_{E_j}}\frac{d\pi^*f_i}{\pi^*f_i}\right)_j=-A^{-1}(\bar C_i\cdot E_j)_j=-A^{-1}S_{C_i}.\]
By linearity, $M_f=M_{f_1\cdots f_m}=\sum_{i=1}^m-A^{-1}S_{C_i}=-A^{-1}S_C$.
By applying Theorem~\ref{ell} to the foliation with holomorphic first integral $f$ we obtain that
\[m_f:=(\nu_{p_{j-1}}(\pi_{j-1}\circ\cdots\circ\pi_1)^*df)_j=F(M_f-u).\]
\end{remark}

Recall that the \textbf{Milnor number} $\mu_0(\F)$ of the foliation $\F$ at $0\in\C^2$ given by the $1$-form $\omega=P(x,y)dx+Q(x,y)dy$ is defined by 
\[ \mu_0(\F)=i_0(P,Q),\] 
where $i_0(P,Q)$ denotes the intersection number of two germs $P$ and $Q$ at the origin. Remember that we consider  $P$ and $Q$  coprime, so $\mu_0(\F)$ is a non negative integer. In \cite[Theorem A]{CLS} it was proved that the Milnor number of a foliation is  a topological invariant. As a consequence of the previous result we obtain a formula from the reduction data.

\begin{theorem}\label{muF}
If $\pi$ is a reduction of singularities, not necessarily minimal, of a generalized curve $\F$ on $(\C^2,0)$ with balanced divisor $\B$ then
\[\mu_0(\F)=\langle -A^{-1}S_{\B},S_{\B}\rangle-\langle S_{\B},(I+F^{-1})u\rangle+1
,\]
\end{theorem}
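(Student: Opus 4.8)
The plan is to collapse the matrix identity to a single scalar formula for the Milnor number in terms of multiplicities, and then to recognize that scalar formula as the classical Milnor--Noether formula for the separatrix data. First I would rewrite the right-hand side using the factorization of Lemma~\ref{Cholesky}. Since $A=-F^{\mathsf T}F$ yields $-A^{-1}=F^{-1}(F^{-1})^{\mathsf T}$, setting $w:=(F^{-1})^{\mathsf T}S_{\B}$ we obtain $\langle -A^{-1}S_{\B},S_{\B}\rangle=\langle w,w\rangle$ and $\langle S_{\B},F^{-1}u\rangle=\langle w,u\rangle$, so the asserted formula is equivalent to
\[
\mu_0(\F)=\langle w,w\rangle-\langle w,u\rangle-\langle S_{\B},u\rangle+1=\sum_{j=1}^n w_j(w_j-1)-\langle S_{\B},u\rangle+1.
\]
The key observation is that $w$ is the vector of (signed) multiplicities of $\overline{\B}$ at the successive blow-up centres: from $w=(F^{-1})^{\mathsf T}S_{\B}$ one gets $S_{\B}=F^{\mathsf T}w$, which read row by row is exactly the proximity relation $\overline{\B}\cdot E_i=w_i-\sum_{p_{k-1}\in E_i}w_k$ obtained by iterating Lemma~\ref{aux1}; equivalently $w=\ell+F\iota$ by Theorem~\ref{ell}.

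Next I would settle the non-dicritical case, where the whole content becomes classical. Here $\iota=u$, the balanced divisor is the reduced separatrix curve $C=\sep(\F)$, and $w_j$ is the ordinary multiplicity sequence of $C$. Moreover $\langle S_C,u\rangle=\sum_i \overline C\cdot E_i$ equals the number $r$ of local branches of $C$, because after resolution each branch meets $E$ transversally at a single smooth point. Hence the displayed formula reads $\sum_j w_j(w_j-1)-r+1$, which is precisely the Milnor--Noether expression $2\delta(C)-r+1$ with $\delta(C)=\sum_j\binom{w_j}{2}$. Since $\F$ is a generalized curve, $\mu_0(\F)=\mu_0(C)$ by Camacho--Lins Neto--Sad~\cite{CLS}, and the identity follows at once.

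The general, dicritical case is the main obstacle, and I would treat it by induction on the number $n$ of blow-ups. Here $\B$ genuinely carries coefficients $-1$ (forced by condition (c) at components of valence $\geq 3$), $w$ is only a signed multiplicity vector, and $\mu_0(\F)$ is a foliation invariant rather than the Milnor number of an honest curve, so the classical argument does not apply verbatim. The inductive step compares the behaviour of $\mu_0(\F)$ under the first blow-up $\pi_1$, governed by the transformation law for the foliation Milnor number (which takes one form when $E_1$ is invariant and a different form when $E_1$ is dicritical, each involving the algebraic multiplicity $\nu_0(\F)$), with the corresponding decomposition of the quadratic form. The star-shaped structure of the dual graph around $E_1$ makes the intersection data block-decompose over the singular points of $\pi_1^{*}\F$ lying on $E_1$, each resolved by a strictly smaller sub-tree, and the recursions \eqref{Fn}--\eqref{SBn} together with Lemma~\ref{Cholesky} propagate $F$, $A$ and $S_{\B}$ through this decomposition.

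I expect the principal difficulty to be reconciling the dicritical contributions: one must check that the terms indexed by the non-invariant components $\delta$ cancel in the way demanded by the blow-up law, and this is exactly the content of Lemma~\ref{bal}, namely $\langle \delta,\,S_{\B}+(A-F^{\mathsf T})u\rangle=0$, which also guarantees that the right-hand side is independent of the chosen balanced divisor. I anticipate that the bookkeeping of these dicritical terms, rather than any conceptual obstruction, is the crux; once the scalar identity $\mu_0(\F)=\sum_j w_j(w_j-1)-\langle S_{\B},u\rangle+1$ is established for all generalized curves, the matrix form of Theorem~\ref{muF} follows immediately from the reductions of the first paragraph.
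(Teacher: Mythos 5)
Your first two paragraphs are correct and essentially complete: the reduction of the stated identity to the scalar form $\mu_0(\F)=\sum_j w_j(w_j-1)-\langle S_{\B},u\rangle+1$ with $w=(F^{-1})^{\mathsf T}S_{\B}$ is exactly right, the identification of $w$ with the (signed) multiplicity sequence via the proximity relations is correct, and in the non-dicritical case the formula does collapse to Milnor--Noether plus the Camacho--Lins Neto--Sad equality $\mu_0(\F)=\mu_0(C)$. The problem is that the dicritical case --- which is the actual content of the theorem --- is never proved: your third and fourth paragraphs are a plan, not an argument. The inductive step you describe requires (i) the blow-up transformation law for $\mu_0(\F)$ when $E_1$ is dicritical, which at an intermediate stage of the reduction carries extra tangency terms between $\pi_1^{*}\F$ and $E_1$ that you do not account for; (ii) a verification that the germ of $\overline{\B}$ at each point of $E_1$ (augmented by the local branch of $E_1$ when invariant) is again a balanced divisor for the local foliation, so that the inductive hypothesis applies; and (iii) the actual cancellation of the $\delta$-indexed terms, which you only conjecture ``is exactly the content of Lemma~\ref{bal}.'' None of these is carried out, and you explicitly flag the bookkeeping as the crux rather than resolving it. As written, the proof establishes the theorem only for non-dicritical $\F$.

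For comparison, the paper does not induct on blow-ups at all. It applies the Van den Essen formula $\mu_0(\F)=\sharp\mathrm{Sing}(\pi^{*}\F)+\sum_j(\ell_j^2-\ell_j-1)$ once, at the end of the full reduction (where no tangencies remain and every singularity is reduced), counts $\sharp\mathrm{Sing}(\pi^{*}\F)$ combinatorially as attaching points of isolated separatrices plus corners minus attaching points on dicritical components, uses the balanced condition to rewrite $\sum_{D\,\mathrm{dic}}\mr{val}_D$ as $2\langle\delta,u\rangle-\langle S_{\B},\delta\rangle$, substitutes Theorem~\ref{ell} for $\ell$, and then shows that the residual combinatorial term equals $2$ by combining Lemma~\ref{bal}, $-A=F^{\mathsf T}F$, Lemma~\ref{aux}, and the fact that the dual graph is a tree. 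All of the dicritical bookkeeping you defer is concentrated in that one closed-form computation; if you want to salvage your inductive route, you would need to supply the analogous identities at each intermediate stage, which is substantially more work than the global argument.
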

\begin{proof} The Van den Essen formula implies that 
\[\mu_0(\F)=\sharp\mr{Sing}(\pi^*\F)+N(\ell),\] 
where $N(\ell)=\sum_{j=1}^n(\ell_j^2-\ell_j-1)=\langle\ell,\ell\rangle-\langle\ell,u\rangle-\langle u,u\rangle$. We can write $\B=\mc I+\mc D$ where $\mc I$ are the isolated separatrices of $\F$ and the support of $\mc D$ consists of some dicritical separatrices of $\F$. The number of singularities of $\pi^*\F$ is 
\[ 
\underbrace{
\langle S_{\mc I},u\rangle
}_{\begin{array}{c}\scriptstyle\text{attaching points of}\\ \scriptstyle\text{isolated separatrices}\end{array}}
+\underbrace{\langle u,u\rangle-1}_{\text{corners}}-\underbrace{\sum_{D\text{ dic}}\mr{val}_D}_{\begin{array}{c}\scriptstyle\text{attaching points of}\\ \scriptstyle\text{dicritical components}\end{array}}\] 
and, using that $\mc B$ is balanced, we have $\sum_{D\text{ dic}}\mr{val}_D=
2\langle\delta,u\rangle-\langle S_{\mc B},\delta\rangle$, where $\delta=u-\iota$ is the vector of dicritical components of~$E$.
Since  $S_{\mc B}=S_{\mc I}+S_{\mc D}$ and $\langle S_{\mc I},\delta\rangle=\langle S_{\mc D},\iota\rangle=0$ we deduce that $\langle S_{\mc I},u\rangle=\langle S_{\mc B},u-\delta\rangle$.
Thus we obtain 
\begin{align*}
\mu_0(\F)&=\langle S_{\mc B},u-\delta\rangle+\langle u,u\rangle-1-\Big(2\langle\delta,u\rangle-\langle S_{\mc B},\delta\rangle\Big)+\langle\ell,\ell\rangle-\langle \ell,u\rangle-\langle u,u\rangle\\
&=\langle S_{\mc B},u\rangle-2\langle \delta,u\rangle+\langle\ell,\ell\rangle-\langle\ell,u\rangle-1.
\end{align*}
Since, according to Theorem~\ref{ell},
\begin{align*}
\langle\ell,\ell\rangle&=\langle (F^{-1})^{\mathsf{T}}S_{\mc B},(F^{-1})^{\mathsf{T}}S_{\mc B}\rangle-2\langle (F^{-1})^{\mathsf{T}}S_{\mc B},F\iota\rangle+\langle F\iota,F\iota\rangle=\langle -A^{-1}S_{\mc B},S_{\mc B}\rangle-2\langle S_{\mc B},\iota\rangle-\langle \iota,A\iota\rangle
\end{align*}
and
\[\langle \ell,u\rangle=\langle (F^{-1})^{\mathsf{T}}\big(S_{\mc B}-F^{\mathsf{T}}F\iota\big),u\rangle=\langle S_{\mc B} + A\iota,F^{-1}u\rangle,\]
we deduce that
\begin{align*}
\mu_0(\F)&=\langle -A^{-1}S_{\mc B},S_{\mc B}\rangle+\langle S_{\mc B},u\rangle-2\langle \delta,u\rangle-2\langle S_{\mc B},u-\delta\rangle-\langle \iota, A\iota\rangle-\langle S_{\mc B}+A\iota,F^{-1}u\rangle-1\\
&=\langle -A^{-1}S_{\mc B},S_{\mc B}\rangle+\langle S_{\mc B},u-2u-F^{-1}u\rangle+2\langle S_{\mc B},\delta\rangle{\color{red}-}\langle A\iota,F^{-1}u\rangle-2\langle\delta,u\rangle-\langle\iota,A\iota\rangle-1\\
&=\langle -A^{-1}S_{\mc B},S_{\mc B}\rangle-\langle S_{\mc B},(I+F^{-1})u\rangle-1+\beta,
\end{align*}
where $\beta=2\langle S_{\mc B},\delta\rangle-\langle A\iota,F^{-1}u\rangle-2\langle\delta,u\rangle-\langle\iota,A\iota\rangle$. It remains to prove that $\beta=2$.
By applying Lemma~\ref{bal} in the first equality, the relation  $-A=F^{\mathsf{T}}F$ in the second one and Lemma~\ref{aux} in the fourth one, we obtain:
\begin{align*}
\beta&=2\langle(F^{\mathsf{T}}-A)u,\delta\rangle-2\langle u,\delta\rangle-\langle A\iota,F^{-1}u+\iota\rangle
=\langle 2F^{\mathsf{T}}u-2u-2Au,\delta\rangle+\langle u-\delta,F^{\mathsf{T}}u-Au+A\delta\rangle\\
&=\langle F^{\mathsf{T}}u-2u-Au-A\delta,\delta\rangle+\langle u,F^{\mathsf{T}}u-Au+A\delta\rangle
=\langle \alpha-Au-A\delta,\delta\rangle+\langle u,\alpha+2u-Au+A\delta\rangle\\
&=\langle\alpha-A\delta,\delta\rangle+\langle u,\alpha+2u-Au\rangle,
\end{align*}
where $\alpha=(A_{11},\ldots,A_{nn})^{\mathsf{T}}$.
Using that two dicritical components do not meet we have that
\[\langle A\delta,\delta\rangle=\langle \sum_{E_i\text{ dic}}A\varepsilon_i,\sum_{E_j\text{ dic}}\varepsilon_j\rangle=\sum_{E_i,E_j \text{ dic}}A_{ij}=\delta_1A_{11}+\cdots+\delta_nA_{nn}=\langle\delta,\alpha\rangle.\]
Finally, using that the dual graph of $E$ is a tree (so that its Euler characteristic is one) we have that its number of corners $\sum_{i<j}A_{ij}$ of $E$ is $n-1$ and consequently
\[\langle u,\alpha+2u-Au\rangle=2n-\sum_{i,j}A_{ij}+\sum_{i}A_{ii}=2n-2\sum_{i<j}A_{ij}=2.\]
This completes the proof.
\end{proof}

\begin{remark}
Let ${f = 0}$ be a reduced curve, and let $\mathcal{F}$ be the (non-dicritical) foliation defined by $df = 0$. In this case, $\mathcal{F}$ and $f$ share the same resolution, $\mu_0(\mathcal{F}) = \mu_0(f)$, and the balanced divisor of $\mathcal{F}$ is $\mathcal{B} = \{f = 0\}$. We denote $S_{\mathcal{B}}$ by $S_f$ in this situation.
\end{remark}

\begin{corollary}\label{cor}
Let ${f = 0}$ be a reduced curve in $(\C^2,0)$. Let $A$, $F$, $S_f$ be the previous matrices and vector corresponding to a resolution of $f$. Then $\mu_0(f)=\langle -A^{-1}S_f, S_f\rangle -\langle (I+F^{-1})u, S_f \rangle +1$.
\end{corollary}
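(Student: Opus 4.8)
The plan is to read Corollary~\ref{cor} off Theorem~\ref{muF}, applied to the foliation $\mathcal{F}$ defined by the exact form $df$. What has to be checked is that $\mathcal{F}$ satisfies the hypotheses of that theorem, i.e.\ that it is a generalized curve, and that its balanced divisor and Milnor number translate into the data $S_f$ and $\mu_0(f)$ appearing in the statement.

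First I would establish the two structural facts about $\mathcal{F}$. Non-diriticalness is immediate: $\pi^*f$ is a first integral of $\pi^*\mathcal{F}$, so the level set $\{\pi^*f=0\}$ is invariant, and since $E=\pi^{-1}(0)\subseteq\{\pi^*f=0\}$ every component $E_i$ of the exceptional divisor is invariant. The generalized-curve property is the crux of the argument: a foliation with a holomorphic first integral has finite (periodic) holonomy along each invariant component of its reduction, whereas a saddle-node carries holonomy tangent to the identity but of infinite order; hence no saddle-nodes can occur in the reduction of $\mathcal{F}$. By the equidesingularization theorem of Camacho--Lins Neto--Sad this also guarantees that the reduction $\pi$ of $\mathcal{F}$ coincides with the embedded resolution of $f=0$, so that the matrices $A,F$ and the vector $S_f$ attached to the resolution of $f$ are precisely the combinatorial data of the reduction of $\mathcal{F}$.

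It then remains to match notation. Being non-dicritical, $\mathcal{F}$ admits a unique balanced divisor, namely the sum of its isolated separatrices; these are exactly the branches of $f=0$, so $\mathcal{B}=\{f=0\}$ and $S_{\mathcal{B}}=S_f$. Writing $\mathcal{F}$ as $\omega=f_x\,dx+f_y\,dy$ gives $\mu_0(\mathcal{F})=i_0(f_x,f_y)=\mu_0(f)$. Substituting $S_{\mathcal{B}}=S_f$ into the formula of Theorem~\ref{muF} and using the symmetry $\langle S_{\mathcal{B}},(I+F^{-1})u\rangle=\langle(I+F^{-1})u,S_f\rangle$ of the bilinear pairing yields the claimed identity verbatim; no further computation is required. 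The only genuinely nontrivial input is the generalized-curve property of $df=0$, and everything else is bookkeeping.
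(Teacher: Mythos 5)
Your proposal is correct and follows exactly the paper's route: the paper proves Corollary~\ref{cor} as ``a direct application'' of Theorem~\ref{muF} to the foliation $df=0$, relying on the preceding remark that this foliation is non-dicritical, shares its resolution with $f$, has balanced divisor $\mathcal{B}=\{f=0\}$, and satisfies $\mu_0(\mathcal{F})=\mu_0(f)$. You simply supply the justifications (finite holonomy excluding saddle-nodes, hence the generalized-curve hypothesis; equidesingularization identifying the two resolutions) that the paper leaves implicit, and these are all sound.
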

\begin{proof}
The proof is a direct application of the last theorem.
\end{proof}

\begin{corollary}\label{cor1}
Let $\{f=0\}$ and $\{g=0\}$ be coprime and reduced curves in $(\C^2,0)$. Let $A, F, S_f, S_g$ be the data corresponding to a resolution of $\{f g=0\}$, then 
$i_0(f,g) = \langle -A^{-1}S_f, S_g\rangle.$
\end{corollary}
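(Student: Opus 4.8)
The plan is to deduce the formula from the additivity of the resolution data under products, combined with the classical relation between the Milnor number of a product and the intersection multiplicity of its factors. Concretely, I fix a single resolution $\pi$ of the curve $\{fg=0\}$; since it resolves $fg$, it simultaneously resolves $f$, $g$ and $fg$ (these need not be minimal resolutions, but Theorem~\ref{muF}, and hence Corollary~\ref{cor}, permits non-minimal reductions). In particular the matrices $A$ and $F$ are \emph{common} to all three curves, and only the intersection vectors change. The foliations $df=0$, $dg=0$ and $d(fg)=0$ are non-dicritical generalized curves with balanced divisors $\{f=0\}$, $\{g=0\}$ and $\{fg=0\}$ respectively, so Corollary~\ref{cor} applies verbatim to each.

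The first key observation is the additivity $S_{fg}=S_f+S_g$: the strict transform of $\{fg=0\}$ is the union of the strict transforms of $\{f=0\}$ and $\{g=0\}$, so that $\overline{\{fg=0\}}\cdot E_j=\overline{\{f=0\}}\cdot E_j+\overline{\{g=0\}}\cdot E_j$ for every $j$. Writing the Corollary~\ref{cor} expression for $fg$ and substituting $S_{fg}=S_f+S_g$, I expand the quadratic term $\langle -A^{-1}(S_f+S_g),S_f+S_g\rangle$ and the linear term $\langle(I+F^{-1})u,S_f+S_g\rangle$. Using that $-A^{-1}=F^{-1}(F^{-1})^{\mathsf{T}}$ is symmetric (a consequence of Lemma~\ref{Cholesky}, or simply of the symmetry of $A$), the two cross terms combine into $2\langle -A^{-1}S_f,S_g\rangle$, and after subtracting the instances of Corollary~\ref{cor} for $f$ and for $g$ I obtain
\[
\mu_0(fg)=\mu_0(f)+\mu_0(g)-1+2\langle -A^{-1}S_f,S_g\rangle.
\]

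The second ingredient is the classical product formula for plane curve singularities,
\[
\mu_0(fg)=\mu_0(f)+\mu_0(g)+2\,i_0(f,g)-1,
\]
which follows from $\mu=2\delta-r+1$ together with the additivity $\delta(fg)=\delta(f)+\delta(g)+i_0(f,g)$ of the delta invariant under a union of coprime curves. Comparing the two displayed identities immediately yields $i_0(f,g)=\langle -A^{-1}S_f,S_g\rangle$, as claimed.

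The only genuinely non-formal step is the product formula for $\mu_0(fg)$; everything else is bilinear bookkeeping together with the symmetry of $A^{-1}$. I therefore expect this to be the main point to pin down or to cite, since the rest is a direct expansion once $S_{fg}=S_f+S_g$ is in hand.
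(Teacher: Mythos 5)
Your proposal is correct and follows exactly the paper's own argument: the authors likewise apply Corollary~\ref{cor} to $f$, $g$ and $fg$, use $S_{fg}=S_f+S_g$, and compare with the classical identity $\mu_0(fg)=\mu_0(f)+\mu_0(g)+2i_0(f,g)-1$. You have merely written out the bilinear expansion that the paper leaves implicit.
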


\begin{proof}
It is well-known that $\mu_0(fg)=\mu_0(f)+\mu_0(g)+2i_0(f,g)-1$ and, since $S_{fg}=S_f+S_g$, the result is a consequence of the previous corollary. 
\end{proof}


\section{Indices of foliations}

Throughout this section, we will always work under the assumption that \( \mathcal{F} : \omega = 0\) is a \textbf{generalized curve} and let $\mc B$ be a balanced divisor for $\F$. Let $\pi=\pi_m\circ\cdots\circ\pi_1$ be a resolution of $\F$ with intersection matrix $A=-F^{\mathsf{T}}F$, Cholesky matrix $F$ and invariant vector $\iota$. A useful tool for obtaining our formulas is given by the following lemma.
\begin{lemma}\label{multiplicidades de curva}
Let $C$ be an invariant curve of $\F$, then the vector of algebraic multiplicities of the strict transforms of $C$ is given by
$$
(\nu_0(C),\nu_{p_1}(\bar C_1),\ldots, \nu_{p_{m-1}}(\bar C_{m-1}))^{\mathsf{T}}=(F^{-1})^{\mathsf{T}}S_C.
$$
\end{lemma}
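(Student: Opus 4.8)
The plan is to pass through the total transform of $C$ and then to reduce the statement to the factorization $A=-F^{\mathsf{T}}F$ of Lemma~\ref{Cholesky}. Write $m_C=(\nu_0(C),\nu_{p_1}(\bar C_1),\ldots,\nu_{p_{m-1}}(\bar C_{m-1}))^{\mathsf{T}}$ for the vector of algebraic multiplicities, and let $\ell=(\ell_1,\ldots,\ell_m)^{\mathsf{T}}$ be the vector of coefficients of the exceptional components in the total transform, i.e.\ $\pi^*C=\bar C+\sum_{j=1}^m\ell_j E_j$. What I need are a linear relation between $\ell$ and $S_C$ and a linear relation between $\ell$ and $m_C$; the asserted identity will then drop out of $A=-F^{\mathsf{T}}F$ in one line.

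First I would record that $\ell=-A^{-1}S_C$. Since $\pi_*E_i=0$, the projection formula gives $\pi^*C\cdot E_i=C\cdot\pi_*E_i=0$ for every $i$, so intersecting $\pi^*C=\bar C+\sum_j\ell_jE_j$ with $E_i$ yields $0=S_{C,i}+(A\ell)_i$, that is $S_C=-A\ell$. This is exactly the computation already used for $M_f$ in the remark preceding Theorem~\ref{muF}.

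The main step is to show $m_C=F\ell$. For this I would track the total transform blow-up by blow-up: writing $(\pi_k\circ\cdots\circ\pi_1)^*C=\bar C_k+\sum_{i\le k}\ell_i^{(k)}E_i^k$, the pullback formulas $\pi_k^*\bar C_{k-1}=\bar C_k+m_kE_k^k$ and $\pi_k^*E_i^{k-1}=E_i^k+e_{k,i}E_k^k$ (the multiplicity part of Lemma~\ref{aux1}, using that each $E_i^{k-1}$ is smooth, so it contributes $e_{k,i}\in\{0,1\}$ according to whether the center $p_{k-1}$ lies on it) show that the already created coefficients are unchanged, $\ell_i^{(k)}=\ell_i^{(k-1)}$ for $i<k$, while the new one is $\ell_k^{(k)}=m_k+\sum_{i<k}e_{k,i}\ell_i^{(k-1)}$. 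Hence each $\ell_k$ stabilizes as soon as $E_k$ is created and satisfies $\ell_k=m_k+\sum_{i<k}e_{k,i}\ell_i$; comparing with the rows of $F$ gives precisely $m_k=\ell_k-\sum_{i<k}e_{k,i}\ell_i=(F\ell)_k$, i.e.\ $m_C=F\ell$.

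Finally I would combine the two relations via Lemma~\ref{Cholesky}:
\[
m_C=F\ell=-FA^{-1}S_C=F(F^{\mathsf{T}}F)^{-1}S_C=FF^{-1}(F^{\mathsf{T}})^{-1}S_C=(F^{-1})^{\mathsf{T}}S_C,
\]
which is the claimed formula. I expect the only delicate point to be the bookkeeping in the main step: verifying that the coefficient of each exceptional component is frozen once that component appears, and correctly isolating the contribution $e_{k,i}$ of the previously created divisors passing through $p_{k-1}$, which is exactly where the lower-triangular unipotent structure of $F$ enters. As an alternative, one could prove $m_C=F\ell$, or indeed the whole statement, by an induction on $m$ formally identical to the proof of Theorem~\ref{ell}, splitting according to whether the last center is a corner or a free point.
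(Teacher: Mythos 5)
Your proof is correct, and it takes a genuinely different route from the paper's. The paper proves the identity by a direct induction on the number of blow-ups: it first reduces to an irreducible $C$ desingularized by an initial segment of $\pi$, and then combines the recursion (\ref{SBn}) for $S_C$ under one more blow-up with the block form of $F_n^{-1}$ from (\ref{Fn}) to propagate the claimed formula. You instead factor the statement through the coefficient vector $\ell$ of the exceptional part of the total transform: the relation $S_C=-A\ell$ comes from the projection formula (the same computation the paper uses for $M_f$ in the remark after Theorem~\ref{ell}), the relation $m_C=F\ell$ comes from the elementary one-step pullback formulas (your bookkeeping that the coefficient of each $E_i$ is frozen once $E_i$ is created, and that the new coefficient is $m_k+\sum_{i<k}e_{k,i}\ell_i$, is exactly right and matches the rows of $F$), and Lemma~\ref{Cholesky} then gives $F(-A^{-1})=(F^{-1})^{\mathsf{T}}$ in one line. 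Your argument buys a little extra generality and transparency: it needs neither irreducibility nor $\F$-invariance of $C$, nor any reordering of the blow-ups, nor that $\pi$ desingularizes $C$, and it explains conceptually why the transpose inverse of the Cholesky factor appears. The paper's version buys uniformity, since it reuses verbatim the inductive scheme already set up for Theorem~\ref{ell}. The only cosmetic caveat is your reuse of the letter $\ell$, which in the surrounding text denotes the multiplicities of $\pi^*\F$ rather than of $\pi^*C$; since you redefine it explicitly this causes no harm, but a different symbol would avoid a clash.
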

\begin{proof}
Assume that $\pi=\pi_m\circ\cdots\circ\pi_n\circ\cdots\circ\pi_1$ is a desingularization of the irreducible $\F$-invariant curve $C$, that the strict transform  $\bar C_i$  meet $E_i^i$ at $p_i$ for $i=1,\ldots,n$  and 
$\bar C_{i}\cap E_i^i=\emptyset$ if $i>n$. Since, recall (\ref{SBn}),
\[S_C^n=\left(\begin{array}{c}S_C^{n-1}\\ 0\end{array}\right)-\nu_{p_{n-1}}(\overline{C}_{n-1})\left(\begin{array}{c}
E_1\cdot E_n\\ \vdots\\ E_n\cdot E_n\end{array}\right),\] 
it follows easily by induction on $n$ that
$(\nu_0(C),\nu_{p_1}(\bar C_1),\ldots, \nu_{p_{n-1}}(\bar C_{n-1}))^{\mathsf{T}}=(F^{-1}_n)^{\mathsf{T}}S_C^n$.
Since
\[S_C^m=\left(\begin{array}{c}S^n_C\\ 0\end{array}\right)\]
we conclude that
\begin{align*}
(\nu_0(C),\nu_{p_1}(\bar C_1),\ldots, \nu_{p_{m-1}}(\bar C_{m-1}))^{\mathsf{T}}&=(\nu_0(C),\nu_{p_1}(\bar C_1),\ldots, \nu_{p_{n-1}}(\bar C_{n-1}),0,\ldots,0)^{\mathsf{T}}\\
&=
(F^{-1}_m)^{\mathsf{T}}S_C^m.
\end{align*}
\end{proof}

\subsection{G\'omez-Mont - Seade -Verjovsky index}
 Let $C=\{f=0\}$ be a reduced effective divisor invariant by $\F$, then we can write (\cite{Su})
\begin{equation}\label{descomposicion}
g\omega = hdf + f \eta,
 \end{equation}
with $f$ and $h$ and $f$ and $h$ relatively prime and $\eta$ a holomorphic $1$--form. We define  the \textbf{G\'omez-Mont- Seade - Verjovsky index} (GSV index) of $\mathcal{F}$ with respect to $C$ as 
\begin{equation*}
GSV_0(\mathcal{F}, C)= ord_0 \left.\left( \frac{h}{g}\right)\right|_C=\frac{1}{2\pi i}\int_{\partial C}\frac{g}{h}d\left(\frac{h}{g} \right),
\end{equation*} 
where $\partial C$ is the intersection of $C$ with a small sphere around $0$, with the induced orientation.

If $C_1$ and $C_2$ are $\F$-invariant curves without common components, then the following formula holds (cf. \cite{Brunella})
\begin{equation}\label{GSV en suma}
GSV_0(\mathcal{F}, C_1+C_2)=GSV_0(\mathcal{F}, C_1)+GSV_0(\mathcal{F}, C_2)- 2 i_0(C_1,C_2).
\end{equation}
For this index we have the following.
\begin{theorem}\label{GSV}
Let $C$ be a $\F$-invariant reduced effective divisor and let $\B$ any balanced divisor of $\F$, then 
\[
GSV_0(\F,C)=\langle -A^{-1}(S_{\mc B}-S_C),S_C\rangle.\]
\end{theorem}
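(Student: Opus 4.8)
The plan is to reduce the identity to the case where $C$ is irreducible, and then to pin down $GSV_0(\F,C)$ by a local computation at the point where the strict transform of $C$ meets the exceptional divisor.

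First I would observe that both sides are additive in $C$ with the same correction term. Writing $G(C):=\langle -A^{-1}(S_{\mc B}-S_C),S_C\rangle$ and using that $A^{-1}$ is symmetric together with $S_{C_1+C_2}=S_{C_1}+S_{C_2}$, a direct expansion gives
\[
G(C_1+C_2)=G(C_1)+G(C_2)-2\langle -A^{-1}S_{C_1},S_{C_2}\rangle=G(C_1)+G(C_2)-2\,i_0(C_1,C_2),
\]
where the last equality is Corollary~\ref{cor1}. Since $GSV_0(\F,\cdot)$ obeys exactly the same rule by (\ref{GSV en suma}), the difference $\Delta(C):=GSV_0(\F,C)-G(C)$ is additive over the irreducible components of $C$. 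As $\F$ is a generalized curve, every invariant reduced curve is a sum of separatrices, so it suffices to prove $\Delta(C)=0$ when $C$ is a single separatrix.

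Next I would rewrite the right-hand side in terms of vanishing orders. By Theorem~\ref{ell} and Lemma~\ref{Cholesky} one has $-A^{-1}S_{\mc B}=F^{-1}\big((F^{-1})^{\mathsf T}S_{\mc B}\big)=F^{-1}\ell+\iota$. If $L=(L_j)_j$ denotes the vector of vanishing orders of $\pi^*\omega$ along the components $E_j$, a blow-up recursion (tracking how the order along a new component is the sum of the orders along the components through its center plus the local saturation order) shows $\ell=FL$, hence $-A^{-1}S_{\mc B}=L+\iota$. On the other hand, by the Remark computing $M_f$ before Theorem~\ref{muF}, the vector $M:=-A^{-1}S_C$ records the vanishing orders of $\pi^*f$ along the $E_j$, where $C=\{f=0\}$. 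Since $\pi$ resolves $\F$ and hence the separatrix $C$, its strict transform $\bar C$ is smooth and meets $E$ transversally at a single point $p$ on a unique component $E_k$, so $S_C=\varepsilon_k$. Therefore
\[
G(C)=\langle L+\iota-M,\varepsilon_k\rangle=L_k-M_k+\iota_k.
\]

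It remains to compute $GSV_0(\F,C)$ and match it with $L_k-M_k+\iota_k$; this local analysis is the main obstacle. Using the integral definition, $GSV_0(\F,C)=\ord_p\big((\pi^*(h/g))|_{\bar C}\big)$, I would pick coordinates $(x,y)$ at $p$ with $E_k=\{x=0\}$ and $\bar C=\{y=0\}$, write $\pi^*\omega=x^{L_k}\tilde\omega$ with $\tilde\omega=a\,dx+b\,dy$ the saturated form and $\pi^*f=x^{M_k}y\,v$ with $v$ a unit. Comparing the coefficients of $dy$ in $\pi^*g\cdot\pi^*\omega=\pi^*h\,d\pi^*f+\pi^*f\,\pi^*\eta$ and restricting to $\{y=0\}$ yields
\[
GSV_0(\F,C)=L_k-M_k+\ord_x\big(b(x,0)\big).
\]
The decisive point is the identification $\ord_x(b(x,0))=\iota_k$. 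If $E_k$ is invariant, then $p$ is a singularity of $\pi^*\F$ which, because $\F$ is a generalized curve, is non-degenerate (no saddle-nodes), so the eigenvalue in the $\bar C$-direction is nonzero and $b(x,0)=\beta x+\cdots$ with $\beta\neq0$, giving $\ord_x(b(x,0))=1=\iota_k$; if $E_k$ is dicritical, then $p$ is a regular point of $\pi^*\F$ with $\bar C$ a leaf transverse to $E_k$, so $b(x,0)$ is a unit and $\ord_x(b(x,0))=0=\iota_k$. In either case $GSV_0(\F,C)=L_k-M_k+\iota_k=G(C)$, which closes the induction. I expect the delicate steps to be the recursion $\ell=FL$ and, above all, the uniform treatment of the invariant and dicritical cases, where the generalized-curve hypothesis is exactly what forces $\ord_x(b(x,0))$ to equal $\iota_k$.
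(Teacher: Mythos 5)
Your argument is correct, but it reaches the irreducible case by a genuinely different route than the paper. The reduction to irreducible $C$ via the additivity defect $-2i_0(C_1,C_2)$ and Corollary~\ref{cor1} is the same as the paper's (the paper performs it at the end rather than the beginning). For an irreducible separatrix, however, the paper iterates Brunella's blow-up formula $GSV_0(\F,C)=GSV_{p}(\bar\F,\bar C)+\nu_0(C)(\ell_1-\nu_0(C))$ along the whole resolution, telescopes the resulting sum using Lemma~\ref{multiplicidades de curva} together with Theorem~\ref{ell}, and only invokes the local structure at the end through the identity $GSV_{p_n}(\bar\F_n,\bar C_n)=\iota_n$. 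You instead bypass the blow-up recursion for the index entirely: you reinterpret $-A^{-1}S_{\mc B}-\iota$ and $-A^{-1}S_C$ as the vanishing-order vectors of $\pi^*\omega$ and $\pi^*f$ (via $\ell=FL$, which is correct and follows from the same divisor bookkeeping as in Lemma~\ref{Cholesky}, though you should spell out the recursion $L_k=\ell_k+\sum_j e_{k,j}L_j$), and then compute $\ord_p\big((\pi^*h/\pi^*g)|_{\bar C}\big)$ by a single normal-form calculation at the attaching point, where the generalized-curve hypothesis guarantees $\ord_x b(x,0)=\iota_k$ (nonzero eigenvalue in the invariant case, transversality of the dicritical component in the other). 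Your local computation is sound --- I checked the $dy$-coefficient comparison and the two cases --- and it makes the proof more self-contained, since it does not rely on the cited behavior of the GSV index under a single blow-up; the price is the extra lemma $\ell=FL$ and a slightly more delicate local analysis, whereas the paper's telescoping argument reuses machinery (Lemma~\ref{multiplicidades de curva}) already needed elsewhere. Both proofs ultimately rest on Theorem~\ref{ell} and on the same endpoint dichotomy encoded by $\iota_k$.
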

\begin{proof}
Assume first that $C$ is irreducible. It is known, cf. \cite{Brunella}, that if $\bar\F$ and $\bar C$ are the strict transforms of $\F$ and $C$ by a single blow-up then
\[GSV_0(\F,C)=GSV_{p}(\bar\F,\bar C)+\nu_0(C)(\ell_1-\nu_0(C)),\] 
where $p$ is the intersection point of $\bar C$ with the exceptional divisor.
Continuing with the process of resolving singularities, we obtain
\[GSV_0(\F,C)=\underbrace{GSV_{p_n}(\bar\F_n,\bar C_n)}_{\iota_n}+\sum_{i=1}^{n}
\nu_{p_{i-1}}(\bar C_{i-1})(\ell_i-\nu_{p_{i-1}}(\bar C_{i-1})).\]
Using Lemma \ref{multiplicidades de curva} and Theorem \ref{ell} we obtain
\begin{align*}
\sum_{i=1}^{n}
\nu_{p_{i-1}}(\bar C_{i-1})(\ell_i-\nu_{p_{i-1}}(\bar C_{i-1}))&=\sum_{i=1}^{m}
\nu_{p_{i-1}}(\bar C_{i-1})(\ell_i-\nu_{p_{i-1}}(\bar C_{i-1}))
\\
&=\langle (F^{-1})^{\mathsf{T}}S_C,\ell-(F^{-1})^{\mathsf{T}}S_C\rangle\\
&=\langle (F^{-1})^{\mathsf{T}}S_C,(F^{-1})^{\mathsf{T}}S_{\mc B}-F\iota-(F^{-1})^{\mathsf{T}}S_C\rangle\\
&=\langle S_C,F^{-1}(F^{-1})^{\mathsf{T}}S_{\mc B}-\iota-F^{-1}(F^{-1})^{\mathsf{T}}S_C\rangle\\
&=\langle S_C,-A^{-1}(S_{\mc B}-S_C)-\iota\rangle\\
&=\langle S_C,-A^{-1}(S_{\mc B}-S_C)\rangle-\underbrace{\langle S_C,\iota\rangle}_{\iota_n}
\end{align*}
and the formula for the GSV index in the statement holds in this case. Now, 

\begin{align*}
GSV_0(\F,C_1+C_2)&=GSV_0(\F,C_1)+GSV_0(\F,C_2)-2i_0(C_1,C_2)\\
&=\langle -A^{-1}(S_{\mc B}-S_{C_1}),S_{C_1}\rangle+\langle -A^{-1}(S_{\mc B}-S_{C_2}),S_{C_2}\rangle-2\langle -A^{-1}S_{C_1},S_{C_2}\rangle\\
&=\langle -A^{-1}(S_{\mc B}-S_{C_1}-S_{C_2}),S_{C_1}+S_{C_2}\rangle,
\end{align*}
so that by induction the formula holds for any invariant reduced effective divisor. 
\end{proof}

As an immediate consequence, we recover Proposition $7$ of \cite{Brunella}.
\begin{proposition}
If $\F$ is a non-dicritical generalized curve and $C$ is the union of the separatrices then $GSV_0(\F,C)=0$.
\end{proposition}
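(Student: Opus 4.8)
The plan is to deduce the statement directly from Theorem~\ref{GSV} by identifying the balanced divisor with $C$ itself. First I would recall the observation made immediately after the definition of balanced divisors: a non-dicritical foliation has a \emph{unique} balanced divisor, namely the sum of its isolated separatrices. Since $\F$ is assumed non-dicritical, it has no dicritical (non-isolated) separatrices, so condition (b) is vacuous; moreover every component of the exceptional divisor of the reduction is $\pi^*\F$-invariant, so condition (c) is vacuous as well. Condition (a) then forces each separatrix to appear with coefficient $+1$. Hence the reduced effective divisor $C$ formed by the union of all separatrices is precisely the balanced divisor $\B$ of $\F$.

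With this identification the two intersection vectors coincide, $S_C = S_{\mc B}$. I would then invoke Theorem~\ref{GSV}, which applies because $\F$ is a generalized curve and $C$ is an $\F$-invariant reduced effective divisor, choosing $\B = C$ as the balanced divisor appearing in the statement. The formula
\[
GSV_0(\F,C) = \langle -A^{-1}(S_{\mc B} - S_C), S_C \rangle
\]
then has vanishing argument $S_{\mc B} - S_C = 0$, so the right-hand side is zero and $GSV_0(\F,C) = 0$.

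There is no genuine obstacle here beyond matching $C$ to the balanced divisor; all the work is carried by Theorem~\ref{GSV}. The only point worth recording explicitly is that in the non-dicritical case the collection of separatrices is finite and assembles into a bona fide balanced divisor, which is exactly the content of the uniqueness remark: no dicritical component can force a coefficient $-1$, and the balancing condition (c) imposes no constraint in the absence of non-invariant components of $E$.
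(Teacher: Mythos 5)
Your proof is correct and follows exactly the paper's argument: the paper likewise observes that for a non-dicritical foliation the unique balanced divisor is the sum of the (isolated) separatrices, so $S_{\mathcal B}=S_C$, and then Theorem~\ref{GSV} gives $GSV_0(\mathcal F,C)=\langle -A^{-1}(S_{\mathcal B}-S_C),S_C\rangle=0$. Your additional remarks spelling out why conditions (b) and (c) are vacuous in the non-dicritical case are a sound elaboration of the same identification.
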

\begin{proof}
We just apply the previous theorem observing that $S_{\mc B}=S_C$.
\end{proof}

\subsection{Milnor number along a separatrix} 

Let $C$ be a separatrix of $\F$ with primitive pa\-ra\-me\-tri\-za\-tion $\gamma: (\C,0)\to(\C^2,0)$ and $v$ a vector field defining $\F$, Camacho-Lins Neto-Sad \cite[Section 4]{CLS} defined 
the \textbf{multiplicity of $\F$ along $C$ at $0$} as
$\mu_0(\F,C):=\ord_t \theta(t)$,
where $\theta(t)$ is the unique vector field at $(\C,0)$ such that 
$\gamma_{*} \theta(t)=v\circ\gamma(t)$. 
If $\omega=P(x,y)dx+Q(x,y)dy$ is a 1-form inducing $\F$ and $\gamma(t)=(x(t),y(t))$, we have
\begin{equation*}
\theta(t)=
\begin{cases}
-\frac{Q(\gamma(t))}{x'(t)} & \text{if $x'(t)\neq 0$}
\medskip \\
 \frac{P(\gamma(t))}{y'(t)} & \text{if $y'(t)\neq 0$}.
\end{cases}
\end{equation*}

Following \cite[Section 2]{FP-GB-SM2024}, we define the multiplicity of  $\mathcal{F}$ along any nonempty divisor of separatrices 
$\mathcal{B}=\sum_{C} a_C\cdot C$ of separatrices of $\mathcal F$ at $0$ as follows:
\begin{equation}\label{eq:gmul}
\mu_0(\mathcal{F},\mathcal{B})=\left(\displaystyle\sum_{C}a_{C}\cdot \mu_0(\mathcal{F},C)\right)-\sum_C a_C +1.
\end{equation}
Note that this is equivalent to extend linearly the function $C \mapsto \mu_0(\mathcal{F},C)-1$. 
\begin{remark}
If $\B_1$ and $\B_2$ are divisors of separatrices, it can be verified that $\mu_0(\mathcal{F},\mathcal{B}_1+\B_2)=\mu_0(\mathcal{F},\mathcal{B}_1)+\mu_0(\mathcal{F},\mathcal{B}_2)-1$. From this we can show that, given a formal sum of divisors of separatrices $\sum_j a_j \B_j$, we have that 
\begin{equation}\label{eq:Milnor along a divisor}
\mu_0(\mathcal{F},\sum_j\mathcal{B}_j)=\left(\displaystyle\sum_ja_{j}\cdot \mu_0(\mathcal{F},\B_j)\right)-\sum_j a_j +1.
\end{equation}
\end{remark}
\begin{remark}\label{GSV vs Milnor}
It can be seen that, for $C$ an invariant curve, $\mu_0(\F,C)=GSV_0(\mathcal{F}, C) + \mu_0(C)$. In fact, it follows easily from the definition if $C$ is irreducible, and from the behavior of the indices with respect to the sum of curves in the general case, see for instance \cite[Proposition~4.1]{Arturo}.
\end{remark}

\begin{theorem}\label{IntMilnor}
Let $C$ be a reduced $\F$-invariant divisor and let $\B$ be any balanced divisor, then 
\[
\mu_0(\F,C)=\langle -A^{-1}S_{\mc B}-(I+F^{-1})u,S_C\rangle +1 .\]
\end{theorem}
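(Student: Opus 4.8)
The plan is to derive the formula by assembling three facts already established in the paper: the relation $\mu_0(\F,C)=GSV_0(\F,C)+\mu_0(C)$ from Remark~\ref{GSV vs Milnor}, the closed expression for the GSV index from Theorem~\ref{GSV}, and the expression for the Milnor number of the reduced curve $C$ from Corollary~\ref{cor}. The only geometric input required is that, since $\F$ is a generalized curve, its reduction of singularities $\pi$ simultaneously desingularizes every separatrix and hence the $\F$-invariant reduced divisor $C$; consequently the matrices $A$, $F$ and the vector $S_C$ attached to $\pi$ are legitimate data for applying Corollary~\ref{cor} to $C$, even though $\pi$ need not be the minimal resolution of $C$.

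First I would record the two ingredients explicitly. By Corollary~\ref{cor} applied to $C$ with the resolution $\pi$,
\[
\mu_0(C)=\langle -A^{-1}S_C,S_C\rangle-\langle(I+F^{-1})u,S_C\rangle+1,
\]
and by Theorem~\ref{GSV}, $GSV_0(\F,C)=\langle -A^{-1}(S_{\mc B}-S_C),S_C\rangle$. Adding these according to Remark~\ref{GSV vs Milnor} gives
\[
\mu_0(\F,C)=\langle -A^{-1}(S_{\mc B}-S_C),S_C\rangle+\langle -A^{-1}S_C,S_C\rangle-\langle(I+F^{-1})u,S_C\rangle+1.
\]

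Next I would simplify the two quadratic terms. Expanding $-A^{-1}(S_{\mc B}-S_C)=-A^{-1}S_{\mc B}+A^{-1}S_C$, the contribution $\langle A^{-1}S_C,S_C\rangle$ cancels exactly against $\langle -A^{-1}S_C,S_C\rangle$, leaving only $\langle -A^{-1}S_{\mc B},S_C\rangle$. Collecting the surviving terms yields
\[
\mu_0(\F,C)=\langle -A^{-1}S_{\mc B},S_C\rangle-\langle(I+F^{-1})u,S_C\rangle+1=\langle -A^{-1}S_{\mc B}-(I+F^{-1})u,S_C\rangle+1,
\]
which is the claimed identity.

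The algebra is entirely routine, and I do not expect any serious obstacle there: because $A$ (and hence $A^{-1}$) is symmetric, all pairings may be rearranged freely and the cancellation is immediate. The one point that deserves care—and which I would state explicitly—is the compatibility of resolutions: Corollary~\ref{cor} is phrased a priori for a resolution of the curve, so I must justify that the resolution $\pi$ of the generalized curve $\F$ does resolve $C$ and that the ``not necessarily minimal'' clause of Theorem~\ref{muF} legitimizes its use with the same $A$ and $F$ coming from $\pi$. This is exactly where the generalized curve hypothesis is used, and it is the only non-formal step in the argument.
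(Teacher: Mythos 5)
Your argument is essentially the paper's own: combine Remark~\ref{GSV vs Milnor}, Theorem~\ref{GSV} and Corollary~\ref{cor}, and cancel the quadratic term $\langle -A^{-1}S_C,S_C\rangle$; the compatibility-of-resolutions point you raise is fine, since a reduction of singularities of a generalized curve desingularizes the total invariant divisor and Corollary~\ref{cor} (via Theorem~\ref{muF}) is valid for non-minimal resolutions. The one missing step is the final one in the paper: the statement allows $C$ to be a reduced invariant \emph{divisor}, which in this paper may have polar part (components with coefficient $-1$), whereas Remark~\ref{GSV vs Milnor} and Corollary~\ref{cor} apply only to effective $C$. The paper finishes by noting that both $\mu_0(\F,C)-1$ (by the definition \eqref{eq:gmul}) and $\langle -A^{-1}S_{\B}-(I+F^{-1})u,S_C\rangle$ are linear in $C$, so the identity proved for effective divisors extends to divisors with polar part. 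You should add this linearity argument; without it your proof covers only the effective case, and the theorem is later used (through Proposition~\ref{mil = intmil}) precisely on balanced divisors having polar part.
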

\begin{proof}
If $C$ is an reduced effective divisor we have (Theorem \ref{GSV} and Corollary \ref{cor})
\begin{align*}
\mu_0(\F,C)&=GSV_0(\mathcal{F}, C) + \mu_0(C) \\
&= \langle -A^{-1}(S_{\mc B}-S_C),S_C\rangle + \langle -A^{-1}S_C, S_C\rangle -\langle (I+F^{-1})u, S_C \rangle +1\\
&=\langle -A^{-1}S_{\mc B}-(I+F^{-1})u,S_C\rangle +1 .
\end{align*}
Finally, we use the fact that both $\mu_0(\mathcal{F}, C) - 1$ and $\langle -A^{-1}S_{\mathcal{B}} - (I + F^{-1})u, S_C \rangle$ are linear in $C$ to state our result for a reduced divisor with polar part.
\end{proof}

Combining this result with Theorem \ref{muF} we obtain
\begin{proposition}{\cite[Proposition 4.7]{FP-GB-SM2021}}\label{mil = intmil}
Let $\F$ be a generalized curve with balanced divisor $\B$, then $\mu_0(\F)=\mu_0(\F,\mc B)$.
\end{proposition}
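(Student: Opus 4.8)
The plan is to deduce the identity by evaluating the two quadratic-form formulas already at hand on a single vector, taking the invariant divisor in Theorem~\ref{IntMilnor} to be the balanced divisor itself. Since $\B$ is a reduced divisor of separatrices (with a possible polar part coming from its $-1$ coefficients), it is an admissible input for Theorem~\ref{IntMilnor}: the linear-extension argument at the end of that proof is exactly what licenses ``reduced divisors with polar part''. Setting $C=\B$ there gives $S_C=S_{\mc B}$, so that
\[
\mu_0(\F,\B)=\langle -A^{-1}S_{\mc B}-(I+F^{-1})u,\,S_{\mc B}\rangle+1
=\langle -A^{-1}S_{\mc B},S_{\mc B}\rangle-\langle (I+F^{-1})u,S_{\mc B}\rangle+1.
\]

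Next I would place this beside the expression for $\mu_0(\F)$ furnished by Theorem~\ref{muF},
\[
\mu_0(\F)=\langle -A^{-1}S_{\mc B},S_{\mc B}\rangle-\langle S_{\mc B},(I+F^{-1})u\rangle+1,
\]
and observe that the only discrepancy is the order of the factors in the middle inner product. Thus it suffices to check that $\langle (I+F^{-1})u,S_{\mc B}\rangle=\langle S_{\mc B},(I+F^{-1})u\rangle$, which is immediate from the symmetry of the standard inner product on $\Z^n$. The quadratic term $\langle -A^{-1}S_{\mc B},S_{\mc B}\rangle$ is common to both formulas and is well defined as a quadratic form because the Cholesky-type factorization $A=-F^{\mathsf{T}}F$ of Lemma~\ref{Cholesky} makes $-A^{-1}=F^{-1}(F^{-1})^{\mathsf{T}}$ a symmetric matrix. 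Hence the two right-hand sides are literally identical, and $\mu_0(\F)=\mu_0(\F,\B)$ follows.

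I do not expect a genuine obstacle here, since the analytic content has already been packaged into Theorems~\ref{muF} and~\ref{IntMilnor}; the proposition is a formal consequence of comparing them. The one point I would make explicit rather than leave implicit is the legitimacy of substituting $C=\B$ into Theorem~\ref{IntMilnor} when $\B$ carries $-1$ coefficients: there $\mu_0(\F,\B)$ must be read through the linear extension~\eqref{eq:Milnor along a divisor} and the index formula used in its reduced-divisor-with-polar-part form. Once this identification is recorded, the proof reduces to the symmetry observation above.
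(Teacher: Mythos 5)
Your proposal is correct and coincides with the paper's own argument: the paper derives Proposition~\ref{mil = intmil} exactly by combining Theorem~\ref{IntMilnor} (applied with $C=\mc B$, read through the linear extension to reduced divisors with polar part) with Theorem~\ref{muF} and observing that the two right-hand sides agree. Nothing further is needed.
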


\subsection{Camacho-Sad index \cite{CS}}
Let $C=\{f=0\}$ be an invariant curve of $\F$ and consider the decomposition given in (\ref{descomposicion}). We define the \textbf{Camacho-Sad index} of $\F$ along $C$ as
$$
CS_0(\F,C) = -\frac{1}{2\pi i}\int_{\partial C}\frac{1}{h}\eta.
$$
In contrast to the GSV-index, see \eqref{GSV en suma}, if $C_1$ and $C_2$ are $\mathcal{F}$-invariant curves without common components, the following holds (cf. \cite{Brunella}):
\begin{equation}\label{CS en suma}
CS_0(\F,C_1+C_2)=CS_0(\F,C_1)+CS_0(\F,C_2)+2i_0(C_1,C_2).
\end{equation}
We extend the definition for divisors with polar part.
\begin{definition}
Let $C=C_0-C_\infty$ be an invariant reduced divisor with $C_0$ and $C_\infty$ effective. We define
\[CS_0(\F,C)=CS_0(\F,C_0)+CS_0(\F,C_\infty)-2i_0(C_0,C_\infty).\]
\end{definition}
For this index we have the following.
\begin{theorem}\label{CS}
Let $C$ be a reduced invariant divisor, then 
\[CS_0(\F,C)=\sum_{p\in \bar C\cap E}CS_p(\bar \F,\bar C_0+\bar C_\infty)+\langle -A^{-1}S_C,S_C\rangle.\]
In other words, the Camacho--Sad index of \( C \) is equal to the sum of the Camacho--Sad indices of each of its branches after reduction, plus the integer \( \langle -A^{-1}S_C, S_C \rangle \).
\end{theorem}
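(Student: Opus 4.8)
The plan is to mirror the strategy used for the GSV index in Theorem~\ref{GSV}: first establish the formula when $C$ is irreducible and effective by tracking the Camacho--Sad index through the resolution, and then extend it to arbitrary reduced invariant divisors (possibly with polar part) by additivity. The starting point is the single blow-up transformation law for the Camacho--Sad index. If $C$ is an irreducible invariant curve through $0$ of multiplicity $\nu_0(C)$, with strict transform $\bar C$ meeting the exceptional divisor at a point $p$, then (cf.~\cite{Brunella})
\[
CS_0(\F,C)=CS_p(\bar\F,\bar C)+\nu_0(C)^2 .
\]
This is the exact analogue of the blow-up formula invoked for the GSV index, with the curve self-intersection term $\nu_0(C)^2$ replacing $\nu_0(C)(\ell_1-\nu_0(C))$; I expect justifying (or citing) this local law to be the main technical point, since everything downstream is formal manipulation.

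Iterating along the resolution $\pi=\pi_m\circ\cdots\circ\pi_1$ and writing $m_i=\nu_{p_{i-1}}(\bar C_{i-1})$ for the successive multiplicities (which vanish once $C$ is resolved), I would obtain
\[
CS_0(\F,C)=CS_{p}(\bar\F,\bar C)+\sum_{i=1}^{m} m_i^2 ,
\]
where $\bar C$ is the final strict transform and $p=\bar C\cap E$. By Lemma~\ref{multiplicidades de curva} the vector $(m_1,\ldots,m_m)^{\mathsf T}$ equals $(F^{-1})^{\mathsf T}S_C$, and since $A=-F^{\mathsf T}F$ gives $-A^{-1}=F^{-1}(F^{-1})^{\mathsf T}$, the sum of squares rewrites as
\[
\sum_{i=1}^m m_i^2=\langle (F^{-1})^{\mathsf T}S_C,(F^{-1})^{\mathsf T}S_C\rangle=\langle S_C,F^{-1}(F^{-1})^{\mathsf T}S_C\rangle=\langle -A^{-1}S_C,S_C\rangle ,
\]
which proves the formula for irreducible effective $C$.

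For a reduced effective divisor $C=C_1+\cdots+C_k$ I would apply \eqref{CS en suma} repeatedly to get $CS_0(\F,C)=\sum_j CS_0(\F,C_j)+2\sum_{j<l} i_0(C_j,C_l)$, substitute the irreducible case for each $CS_0(\F,C_j)$ and Corollary~\ref{cor1} for each $i_0(C_j,C_l)=\langle -A^{-1}S_{C_j},S_{C_l}\rangle$, and collect the quadratic terms into $\langle -A^{-1}S_C,S_C\rangle$ via $S_C=\sum_j S_{C_j}$. Finally, for a divisor with polar part $C=C_0-C_\infty$ I would use the defining relation $CS_0(\F,C)=CS_0(\F,C_0)+CS_0(\F,C_\infty)-2i_0(C_0,C_\infty)$, apply the effective case to $C_0$ and $C_\infty$, and again combine the three quadratic contributions into $\langle -A^{-1}(S_{C_0}-S_{C_\infty}),S_{C_0}-S_{C_\infty}\rangle=\langle -A^{-1}S_C,S_C\rangle$ using the symmetry of $A^{-1}$. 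The one point requiring care is the residual index term: because $\pi$ separates all branches of $C_0\cup C_\infty$, each $p\in\bar C\cap E$ lies on a single branch, so $CS_p(\bar\F,\bar C_0+\bar C_\infty)$ reduces to the index of that branch, and the sums over $\bar C_0\cap E$ and $\bar C_\infty\cap E$ assemble precisely into $\sum_{p\in\bar C\cap E}CS_p(\bar\F,\bar C_0+\bar C_\infty)$, giving the stated identity.
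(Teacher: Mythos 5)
Your proposal is correct and follows essentially the same route as the paper: the single blow-up law $CS_0(\F,C)=CS_p(\bar\F,\bar C)+\nu_0(C)^2$ iterated along the resolution, Lemma~\ref{multiplicidades de curva} together with $-A^{-1}=F^{-1}(F^{-1})^{\mathsf{T}}$ to turn the sum of squared multiplicities into $\langle -A^{-1}S_C,S_C\rangle$, then the addition law \eqref{CS en suma} with $i_0(C_j,C_l)=\langle -A^{-1}S_{C_j},S_{C_l}\rangle$ for reduced effective divisors, and finally the defining relation for $C=C_0-C_\infty$. No gaps; the argument matches the paper's proof step for step.
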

\begin{proof}
Assume first that $C$ is an irreducible invariant curve. If $\pi_1$ is the blow-up of $0$  then (cf. \cite{Brunella})
\[CS_0(\F,C)=CS_{p_1}(\bar\F_1,\bar C_1)+\nu_0(C)^2.\]
If $\pi_n\circ\cdots\circ\pi_1$ is a part of the reduction $\pi=\pi_m\circ\cdots\circ\pi_n\circ\cdots\circ\pi_1$ of $\F$ desingularizing $C$ then
\begin{align*}
CS_0(\F,C)&=CS_{p_n}(\bar \F_n,\bar C_n)
+\sum_{i=1}^n\nu_{p_{i-1}}(\bar C_i)^2
=CS_{p_n}(\bar \F_n,\bar C_n)+\sum_{i=1}^m\nu_{p_{i-1}}(\bar C_i)^2\\
&=CS_{p_n}(\bar \F_n,\bar C_n)+ \langle (F^{-1})^{\mathsf{T}}S_C,(F^{-1})^{\mathsf{T}}S_C  \rangle =CS_{p_n}(\bar \F_n,\bar C_n)+\langle -A^{-1}S_C,S_C\rangle.
\end{align*}
Using that $CS_0(\F,C_1+C_2)=CS_0(\F,C_1)+CS_0(\F,C_2)+2i_0(C_1,C_2)$ we obtain inductively that
\[CS_0(\F,C)=\sum_{i=1}^nCS_{p_i}(\bar \F,\bar C)+\langle -A^{-1}S_C,S_C\rangle\] 
holds for any invariant reduced effective divisor. Consider now $C=C_0 - C_{\infty}$, then by definition we have
\begin{align*}
CS_0(\F,C)=&\sum_{p\in \bar C_0\cap E}CS_p(\bar\F,\bar C_0)+\langle -A^{-1}S_{C_0},S_{C_0}\rangle+
\sum_{p\in \bar C_\infty\cap E}CS_p(\bar\F,\bar C_\infty)+\langle -A^{-1}S_{C_\infty},S_{C_\infty}\rangle\\
&-2\langle -A^{-1}S_{C_0},S_{C_\infty}\rangle\\
&=\sum_{p\in \bar D\cap E}CS_p(\bar\F,\bar C_0+\bar C_\infty)+\langle -A^{-1}(S_{C_0}-S_{C_\infty}),S_{C_0}-S_{C_\infty}\rangle.
\end{align*}
It suffices to note that $\bar C=\bar C_0-\bar C_\infty$ so that $S_C=S_{C_0}-S_{C_\infty}$.
\end{proof}

\subsection{Variation index}
Let $C=\{f=0\}$ be a $\F$-invariant curve. On a pointed neighborhood 
of $0$ we may find a complex valued smooth $1$-form $\beta$, of type $(1,0)$, such that
\begin{equation}\label{descomposicion beta}
d\omega = \beta \wedge \omega.
\end{equation}
The \textbf{Variation index} of $\F$ along $C$ 
is defined as (\cite{K-S})
$$
Var_0(\F,C) = \frac{1}{2\pi i}\int_{\partial C} \beta.
$$
For any invariant curve we have the relation (cf. \cite[Proposition 5]{Brunella})
$$
Var_0(\F,C)= CS_0(\F,C)+ GSV_0(\F,C).
$$
This index is additive in the separatrices of $\F$
$$
Var_0(\F,C_1+C_2)= Var(\F,C_1)+ Var_0(\F,C_2). 
$$
So we extend $Var_0$ by linearity for arbitrary invariant reduced divisors. For this index we have the following.

\begin{theorem}\label{Variation}
Let $C=C_0 - C_{\infty}$ be an invariant reduced divisor, then
\[Var_0(\F,C)=\sum_{p\in E\cap\bar C_0}Var_p(\bar\F,\bar C_0)-\sum_{p\in E\cap\bar C_\infty}Var_p(\bar\F,\bar C_\infty)+\langle -A^{-1}S_{\mc B}-\iota,S_C\rangle.\]
\end{theorem}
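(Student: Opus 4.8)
The plan is to combine the relation $Var_0(\F,C)=CS_0(\F,C)+GSV_0(\F,C)$ with the two formulas already established in Theorem~\ref{CS} and Theorem~\ref{GSV}. Indeed, for the irreducible (or effective) case the Camacho--Sad theorem gives
\[CS_0(\F,C)=\sum_{p\in\bar C\cap E}CS_p(\bar\F,\bar C)+\langle -A^{-1}S_C,S_C\rangle,\]
while Theorem~\ref{GSV} gives $GSV_0(\F,C)=\langle -A^{-1}(S_{\mc B}-S_C),S_C\rangle$. First I would add these two expressions. The quadratic pieces combine as
\[\langle -A^{-1}S_C,S_C\rangle+\langle -A^{-1}(S_{\mc B}-S_C),S_C\rangle=\langle -A^{-1}S_{\mc B},S_C\rangle,\]
so the quadratic term collapses to the single expression appearing in the statement. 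For the local terms, I would invoke the corresponding local identity $Var_p(\bar\F,\bar C)=CS_p(\bar\F,\bar C)+GSV_p(\bar\F,\bar C)$ at each point $p\in\bar C\cap E$ after the reduction. At such a point $\bar C$ is smooth and transverse (since $\F$ is a generalized curve and the reduction is achieved), so the local $GSV$ indices must be accounted for carefully.

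The delicate point is matching the remaining $\langle -\iota,S_C\rangle$ term in the statement against the local $GSV$ contributions. The idea is that the sum of local $GSV$ indices over the attaching points equals $\langle \iota,S_C\rangle$: each point $p$ where $\bar C$ meets an invariant component $E_j$ contributes a local $GSV$ index, and summing these picks out precisely the invariant components, which is encoded by the inner product with $\iota$. Concretely, tracing through the proof of Theorem~\ref{GSV}, the term $\langle S_C,\iota\rangle$ (denoted $\iota_n$ there in the irreducible case) is exactly the accumulated local $GSV$ contribution $GSV_{p_n}(\bar\F_n,\bar C_n)$. Thus rewriting $\sum_p CS_p=\sum_p(Var_p-GSV_p)$ and substituting $\sum_p GSV_p=\langle \iota,S_C\rangle$ converts the Camacho--Sad local sum into the Variation local sum while producing the extra $-\langle \iota,S_C\rangle$, giving the claimed $\langle -A^{-1}S_{\mc B}-\iota,S_C\rangle$.

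For the effective case this yields
\[Var_0(\F,C)=\sum_{p\in E\cap\bar C}Var_p(\bar\F,\bar C)+\langle -A^{-1}S_{\mc B}-\iota,S_C\rangle,\]
which is the statement with $C_\infty=0$. I would then extend to a divisor with polar part $C=C_0-C_\infty$ using the linearity (additivity) of $Var_0$ in the separatrices, exactly as done in the proofs of Theorem~\ref{CS} and Theorem~\ref{Variation}'s companion results: both sides are linear in $C$ since $Var_0(\F,C_1+C_2)=Var_0(\F,C_1)+Var_0(\F,C_2)$ and $S_C=S_{C_0}-S_{C_\infty}$, so the general formula follows by writing $S_C=S_{C_0}-S_{C_\infty}$ and splitting the local sum into the $\bar C_0$ and $\bar C_\infty$ contributions with the appropriate signs.

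The main obstacle I anticipate is the bookkeeping of the local $GSV$ indices at the reduction points and verifying that their total is exactly $\langle \iota,S_C\rangle$ rather than some other combination involving $F$ or $A$. This requires care because $\bar C$ may meet several components of $E$ (invariant or not), and one must confirm that only the invariant attaching points survive in the inner product with $\iota$; the additivity formula \eqref{GSV en suma} for $GSV$ and the already-established identity $\langle S_C,\iota\rangle$ in the proof of Theorem~\ref{GSV} are the key ingredients that make this precise.
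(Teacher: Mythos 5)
Your proposal is correct and follows essentially the same route as the paper, which simply derives the formula from $Var_0=CS_0+GSV_0$ together with Theorems~\ref{CS} and~\ref{GSV} and then extends to divisors with polar part by linearity. The extra bookkeeping you supply --- identifying $\sum_{p}GSV_p(\bar\F,\bar C)=\langle\iota,S_C\rangle$ at the attaching points (the term called $\iota_n$ in the proof of Theorem~\ref{GSV}), which converts the local Camacho--Sad sum into the local Variation sum --- is exactly the detail the paper leaves implicit, and it is accurately carried out.
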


\begin{proof}
For an invariant curve the formula is a direct consequence of Theorems \ref{CS} and \ref{GSV}. For a divisor with polar part we just apply the linearity of the index. 
\end{proof}

\subsection{Baum-Bott index}
Using the writing \eqref{descomposicion beta},
the \textbf{Baum-Bott} index of $\F$ at $0$ is 
$$
BB_0(\F)=\frac{1}{(2 \pi i)^2}\int_{S^3} \beta \wedge d\beta
$$
where $S^3$ in a small sphere around $0$, oriented as a boundary of a small ball containing $0$. For a non-degenerate reduced singularity with eigenvalues $\lambda_1$ and $\lambda_2$ we have
$$
BB_0(\F)= \frac{\lambda_1}{\lambda_2}+\frac{\lambda_2}{\lambda_1}+2.
$$
For the Baum-Bott index we have the following.
\begin{theorem}\label{BB}
Let $\pi$ be a reduction of singularities for $\F$ and $\bar{\F}$ the pull-back foliation, then
$$
BB_0(\F)=\sum_{p\in\mr{Sing}(\bar\F)}BB_p(\bar\F)+\langle -A^{-1}S_{\mc B},S_{\mc B}\rangle -2\langle S_{\mc B},\iota\rangle-\langle A\iota,\iota\rangle.
$$
\end{theorem}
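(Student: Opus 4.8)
The plan is to follow the same blow-up strategy used throughout the section, tracking how $BB_0$ transforms under a single blow-up and then summing the contributions along the resolution. The Baum-Bott index is additive in the sense that, under the resolution $\pi$, the total index decomposes as the sum $\sum_{p\in\mr{Sing}(\bar\F)}BB_p(\bar\F)$ over the singularities of the resolved foliation plus a sum of correction terms, one for each blow-up $\pi_j$. So the first step is to recall (from \cite{Brunella}) the behavior of the Baum-Bott index under a single blow-up: if $\pi_1$ blows up the origin and $\ell_1$ is the multiplicity of $\pi^*\F$ along $E_1$ (the discrepancy introduced in the excerpt), then the correction term is a quadratic expression in $\ell_1$, typically of the form $-(\ell_1+1)^2/1$ coming from the self-intersection $E_1\cdot E_1=-1$, or more precisely a term expressible through the multiplicities $\ell_j$ and the self-intersections $A_{jj}$.

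The second step is to accumulate these corrections over all $n$ blow-ups. Writing each local correction in terms of the multiplicity vector $\ell=(\ell_1,\dots,\ell_n)^{\mathsf T}$ and the self-intersection data encoded in $A$, the total correction should assemble into a quadratic form in $\ell$. Concretely, I expect the aggregate correction to be expressible as $\langle \ell, A\ell\rangle$ or an equivalent contraction against the intersection matrix, since the Baum-Bott index is sensitive to the normal bundle data (self-intersections) of the invariant exceptional components, weighted by the order of vanishing $\ell_j$ along each.

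The third step, which I expect to be the main obstacle, is to convert this quadratic form in $\ell$ into the stated quadratic form in $S_{\mc B}$ and $\iota$. Here I would invoke Theorem \ref{ell}, which gives $\ell=(F^{-1})^{\mathsf T}S_{\mc B}-F\iota$, together with the Cholesky factorization $A=-F^{\mathsf T}F$ from Lemma \ref{Cholesky}. Substituting and expanding,
\[
\langle \ell, A\ell\rangle = \langle (F^{-1})^{\mathsf T}S_{\mc B}-F\iota,\; -F^{\mathsf T}F\big((F^{-1})^{\mathsf T}S_{\mc B}-F\iota\big)\rangle,
\]
and using $F^{\mathsf T}F(F^{-1})^{\mathsf T}=F^{\mathsf T}$ and $-A^{-1}=F^{-1}(F^{-1})^{\mathsf T}$, the cross terms and the pure $S_{\mc B}$ term should collapse to $\langle -A^{-1}S_{\mc B},S_{\mc B}\rangle$, the $S_{\mc B}$-$\iota$ cross term to $-2\langle S_{\mc B},\iota\rangle$, and the pure $\iota$ term to $-\langle A\iota,\iota\rangle$, exactly matching the three trailing terms in the statement. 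The delicate point will be keeping track of signs and of the precise form of the per-blow-up correction: the factor-of-$2$ constants in the Baum-Bott formula $BB_0=\lambda_1/\lambda_2+\lambda_2/\lambda_1+2$ must be reconciled against the combinatorial count of corners and the valences, likely using Lemma \ref{aux} ($F^{\mathsf T}u=2u+\alpha$) and the tree structure of the dual graph as in the proof of Theorem \ref{muF}. Once the algebraic identity is verified, the theorem follows.
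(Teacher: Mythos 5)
Your overall architecture is the same as the paper's: track the Baum--Bott index blow-up by blow-up, accumulate a quadratic correction in the multiplicity vector $\ell$, then substitute Theorem~\ref{ell} and the factorization $A=-F^{\mathsf T}F$. But the two concrete ingredients you supply are both wrong, and the proof as written would not close. First, the per-blow-up correction from Brunella is not $-(\ell_1+1)^2$ and the aggregate is not $\langle\ell,A\ell\rangle$: since the vanishing order of $\pi_1^*\omega$ along $E_1$ is $\ell_1$, one has $N_{\bar\F_1}=\pi_1^*N_{\F}\otimes\mathcal O(-\ell_1E_1)$, and with $E_1^2=-1$ Brunella's Proposition~1 gives $BB_0(\F)=\sum_p BB_p(\bar\F_1)+\ell_1^2$. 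Iterating, the total correction is the plain Euclidean form $\langle\ell,\ell\rangle$, with no contraction against $A$. Second, the algebraic identity you invoke to rescue $\langle\ell,A\ell\rangle$ is false: $F^{\mathsf T}F(F^{-1})^{\mathsf T}=F^{\mathsf T}$ would force $(F^{-1})^{\mathsf T}=F^{-1}$, i.e.\ $F$ symmetric, which fails for every nontrivial resolution. In fact $\langle\ell,A\ell\rangle=-\langle F\ell,F\ell\rangle$, which does not reduce to the three trailing terms of the statement.

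With the correct starting point the computation is immediate and was already carried out inside the proof of Theorem~\ref{muF}: writing $\ell=(F^{-1})^{\mathsf T}S_{\mathcal B}-F\iota$ and using $-A^{-1}=F^{-1}(F^{-1})^{\mathsf T}$ and $-A=F^{\mathsf T}F$,
\[
\langle\ell,\ell\rangle=\langle -A^{-1}S_{\mathcal B},S_{\mathcal B}\rangle-2\langle S_{\mathcal B},\iota\rangle-\langle A\iota,\iota\rangle .
\]
Your final paragraph's concern about reconciling the $+2$ in $BB=\lambda_1/\lambda_2+\lambda_2/\lambda_1+2$ with corner counts, Lemma~\ref{aux}, and the tree structure of the dual graph is misplaced: none of that bookkeeping enters this theorem (it is needed for the subsequent corollary identifying $BB_0(\F)$ with $CS_0(\F,\mathcal B)$, not here). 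So the gap is not in the strategy but in the two quantitative inputs: you need the exact transformation law $BB_0=\sum BB_p+\ell_1^2$, and the Euclidean, not $A$-weighted, quadratic form in $\ell$.
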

\begin{proof}
Let $\pi_1$ be the blow-up of the origin and $\omega$ a $1$-form defining $\F$ with an isolated zero at $0$. Since the vanishing order of $\pi_1^*\omega$  along $E_1$ is $\ell_1$ and it is a global section of $N_{\bar\F_1}^*$ we have that $N^*_{\bar\F_1}=\pi^*N_\F^*\otimes\mc O(\ell_1 E_1)$, so that
$N_{\bar \F_1}=\pi^*N_\F\otimes\mc O(-\ell_1 E)$ and from \cite[Proposition 1]{Brunella} we get
\[BB_0(\F)=\sum_{p\in\mr{Sing}(\bar\F_1)}BB_p(\bar \F_1)+\ell_1^2.\]
By induction we deduce that
\begin{align*}
BB_0(\F)&=\sum_{p\in\mr{Sing}(\bar\F)}BB_p(\bar\F)+\langle\ell,\ell\rangle\\
&=\sum_{p\in\mr{Sing}(\bar\F)}BB_p(\bar\F)+\langle -A^{-1}S_{\mc B},S_{\mc B}\rangle -2\langle S_{\mc B},\iota\rangle-\langle A\iota,\iota\rangle
\end{align*}
thanks to Theorem~\ref{ell}. 
\end{proof}

Now we are able to give an extension of  \cite[Proposition 9]{Brunella}, see also  \cite[Theorem I]{FM}.
\begin{corollary}
Let $\B$ be a balanced divisor of $\F$, then $BB_0(\F)=CS_0(\F,\mc B)=Var_0(\F,\mc B)$.
\end{corollary}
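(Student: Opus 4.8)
The plan is to read all three numbers off the semi-global formulas already established and to reduce the two required equalities to the combinatorial identity that produced $\beta=2$ in the proof of Theorem~\ref{muF}.

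The equality $Var_0(\F,\mc B)=CS_0(\F,\mc B)$ is the easy one. Since $Var_0(\F,\cdot)=CS_0(\F,\cdot)+GSV_0(\F,\cdot)$ after linear extension to invariant divisors, it suffices to observe that Theorem~\ref{GSV} applied to $C=\mc B$ gives
\[GSV_0(\F,\mc B)=\langle -A^{-1}(S_{\mc B}-S_{\mc B}),S_{\mc B}\rangle=0.\]

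For $BB_0(\F)=CS_0(\F,\mc B)$ I would subtract the formula of Theorem~\ref{CS} (with $C=\mc B$) from that of Theorem~\ref{BB}. The quadratic form $\langle -A^{-1}S_{\mc B},S_{\mc B}\rangle$ occurs in both, so the claim reduces to
\[\sum_{p\in\mr{Sing}(\bar\F)}BB_p(\bar\F)-2\langle S_{\mc B},\iota\rangle-\langle A\iota,\iota\rangle=\sum_{p\in\bar{\mc B}\cap E}CS_p(\bar\F,\bar{\mc B}).\]
Because $\F$ is a generalized curve, each $p\in\mr{Sing}(\bar\F)$ is a reduced non-degenerate singularity with two smooth separatrices $\Gamma_1^p,\Gamma_2^p$, so $BB_p(\bar\F)=CS_p(\bar\F,\Gamma_1^p)+CS_p(\bar\F,\Gamma_2^p)+2$. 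I would then regroup the double sum $\sum_p\big(CS_p(\bar\F,\Gamma_1^p)+CS_p(\bar\F,\Gamma_2^p)\big)$ according to the invariant curve carrying each separatrix: the separatrices at singular points are either the invariant components $E_j$ (those with $\iota_j=1$) or the isolated branches of $\bar{\mc B}$, the dicritical branches of $\bar{\mc B}$ meeting $E$ only at regular points. Applying the Camacho--Sad index theorem $\sum_{p\in E_j}CS_p(\bar\F,E_j)=E_j\cdot E_j=A_{jj}$ to each compact invariant rational component $E_j$ converts the $E$-part of this sum into $\sum_j\iota_jA_{jj}=\langle\iota,\alpha\rangle$, with $\alpha=(A_{11},\dots,A_{nn})^{\mathsf{T}}$, while the branch part is precisely the right-hand side $\sum_{p\in\bar{\mc B}\cap E}CS_p(\bar\F,\bar{\mc B})$.

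After this regrouping the identity collapses to $\langle\iota,\alpha\rangle+2\,\#\mr{Sing}(\bar\F)-2\langle S_{\mc B},\iota\rangle-\langle A\iota,\iota\rangle=0$. Inserting $\#\mr{Sing}(\bar\F)=\langle S_{\mc B},u\rangle+n-1-2\langle\delta,u\rangle$ from the proof of Theorem~\ref{muF}, writing $\langle A\iota,\iota\rangle=\langle\iota,\alpha\rangle+2c_{II}$ where $c_{II}$ counts the corners joining two invariant components, and using the balanced condition in the form $\langle S_{\mc B},\delta\rangle=2\langle\delta,u\rangle-\sum_{D\text{ dic}}\mr{val}_D$, everything cancels down to $c_{II}+\sum_{D\text{ dic}}\mr{val}_D=n-1$. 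This last equality is a tree count: the dual graph of $E$ has $n$ vertices and hence $n-1$ edges, two dicritical components never meet, so every corner is either of type $II$ or joins a dicritical $D$ to an invariant component, the latter being enumerated by $\sum_{D\text{ dic}}\mr{val}_D$. The main obstacle is exactly the bookkeeping of this regrouping step: one must confirm that the only separatrices occurring at singularities of $\bar\F$ are the $E_j$ and the isolated branches of $\bar{\mc B}$, that these branches are separated so that a single one passes through each relevant $p$, and that the Camacho--Sad theorem is legitimately applied componentwise to the invariant part of $E$.
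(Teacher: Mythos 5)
Your argument for $BB_0(\F)=CS_0(\F,\mc B)$ is essentially the paper's own proof: both decompose $BB_p(\bar\F)=CS_p(\bar\F,\Gamma_1^p)+CS_p(\bar\F,\Gamma_2^p)+2$ at each reduced non-degenerate singularity, regroup the local Camacho--Sad contributions along the invariant components of $E$ versus the branches of $\bar{\mc B}$, apply the Camacho--Sad index theorem componentwise to get $\langle\iota,\alpha\rangle$, and close with the tree count on the dual graph. (The paper reaches the final cancellation more directly by writing $\langle\iota,A\iota\rangle=\sum_k(E_k^i)^2+2N$ with $N$ the number of invariant--invariant corners and $2M=2\langle S_{\mc B},\iota\rangle$ the number of attaching points; your detour through $\#\mr{Sing}(\bar\F)=\langle S_{\mc B},u\rangle+n-1-2\langle\delta,u\rangle$ and the balanced condition is correct but re-derives the same count.) The bookkeeping worries you raise at the end are all fine: since $\F$ is a generalized curve every singularity of $\bar\F$ is non-degenerate with exactly two smooth separatrices, these are either two invariant components of $E$ or one invariant component and the strict transform of an isolated separatrix (which lies in $\mc B$ with coefficient $1$ by condition (a)), and the Camacho--Sad theorem applies to each compact invariant $E_j$.

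The equality $Var_0(\F,\mc B)=CS_0(\F,\mc B)$ is where your shortcut has a gap. You invoke ``$Var_0=CS_0+GSV_0$ after linear extension'' together with Theorem~\ref{GSV} at $C=\mc B$, but in the dicritical case $\mc B=C_0-C_\infty$ has a polar part, and neither ingredient is available there as stated: Theorem~\ref{GSV} is proved only for reduced \emph{effective} invariant divisors, and the identity $Var_0=CS_0+GSV_0$ is asserted only for curves. Moreover the extension is not formal, because $Var_0$ is extended linearly while $CS_0$ is extended with the correction $-2i_0(C_0,C_\infty)$, so $Var_0(\F,\mc B)-CS_0(\F,\mc B)=GSV_0(\F,C_0)-GSV_0(\F,C_\infty)-2CS_0(\F,C_\infty)+2i_0(C_0,C_\infty)$, which is \emph{not} the formula of Theorem~\ref{GSV} evaluated at $S_{\mc B}$ unless one uses that $C_\infty$ consists of dicritical separatrices whose strict transforms meet $E$ only at regular points of $\bar\F$, whence $CS_0(\F,C_\infty)=\langle -A^{-1}S_{C_\infty},S_{C_\infty}\rangle$ by Theorem~\ref{CS}; with that input the right-hand side does collapse to $\langle -A^{-1}(S_{\mc B}-S_{\mc B}),S_{\mc B}\rangle=0$. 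So your conclusion is correct and the gap is fillable, but the missing step must be supplied. The paper avoids this entirely by arguing locally after reduction: it subtracts the formulas of Theorems~\ref{CS} and~\ref{Variation} for $\mc B$ and uses $CS_p-Var_p=-GSV_p=-1$ at each of the $\langle S_{\mc B},\iota\rangle$ attaching points of isolated separatrices, which is both shorter and valid for divisors with polar part from the outset.
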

\begin{proof}
Let us denote by $C$ the union of the isolated separatrices of $\F$. Since the strict transforms of the dicritical separatrices meet $E$ at regular points of $\bar\F$ we have
\[CS_0(\F,\mc B)=\sum_{p\in\bar C\cap E}CS_p(\bar\F,\bar C)+\langle -A^{-1}S_{\mc B},S_{\mc B}\rangle\]
and
\[Var_0(\F,\mc B)=\sum_{p\in\bar C\cap E}Var_p(\bar\F,\bar C)+\langle -A^{-1}S_{\mc B},S_{\mc B}\rangle-\underbrace{\langle\iota,S_{\mc B}\rangle}_{\langle\iota,S_C\rangle}.\]
Then
\begin{align*}
CS_0(\F,\mc B)-Var_0(\F,\mc B)&=\sum_{p\in E\cap \bar C}\big(CS_p(\bar\F,\bar C)-Var_p(\bar\F,\bar C)\big)+\langle S_C,\iota\rangle\\
&=-\sum_{p\in E\cap\bar C}GSV_p(\bar\F,\bar C)+\langle S_C,\iota\rangle=0
\end{align*}
because $GSV_p(\bar\F,\bar C)=1$ for each $p\in E\cap\bar C$, as $\F$ is a generalized curve. It remains to prove the first equality. Set $\bar C\cap E=\{p_1,\ldots,p_M\}$ and $\lambda_j=CS_0(\bar\F,\bar C,p_j)$ for each $j=1,\ldots,M$. Let us denote by \( E^i_1, \ldots, E^i_k \) the invariant components of \( E \), and let \( q_1, \ldots, q_N \) be the intersection points between any two distinct components among them. We have that $BB_{q_i}(\bar\F)=\mu_i+\frac{1}{\mu_i}+2$ for $i=1,\ldots,N$, where $\mu_i,\frac{1}{\mu_i}$ are the Camacho-Sad indices of $\bar\F$ at $q_i$ along the two invariant irreducible components of $E$ meeting at $q_i$. We have
\begin{align*}
BB_0(\F)-CS_0(\F,\B)&=\sum_{j=1}^M\big(BB_{p_j}(\bar\F)-CS_{p_j}(\bar\F,\bar C)\big)+\sum_{i=1}^NBB_{q_i}(\bar\F)-2\langle S_{\mc B},\iota\rangle-\langle\iota,A\iota\rangle\\
&=\sum_{j=1}^M\Big(\frac{1}{\lambda_j}+2\Big)-2\langle S_{\mc B},\iota\rangle+\sum_{i=1}^N\Big(\mu_i+\frac{1}{\mu_i}+2\Big)-\langle\iota,A\iota\rangle\\
&=\sum_{j=1}^M\frac{1}{\lambda_j}+\sum_{i=1}^N\Big(\mu_i+\frac{1}{\mu_i}+2\Big)-(E^i_1+\cdots+E^i_k)^2.
\end{align*}
By the Camacho-Sad index theorem we have that 
$$
\sum_{j=1}^M\frac{1}{\lambda_j}+\sum_{i=1}^N\Big(\mu_i+\frac{1}{\mu_i}\Big)=(E^i_1)^2+\cdots+(E^i_k)^2.
$$
Moreover $\sum_{r,s}E^i_r \cdot E^i_s = 2N$. We conclude that $BB_0(\F)-CS_0(\F,\B)=0$.
\end{proof}

\section{The bifurcation conjecture}

Let \( f, g \in \mathcal{O}_{\mathbb{C}^2, 0} \) be germs vanishing at the origin and coprime. Recall from the Introduction that the Milnor number of the pair, $\mu(f,g)$, coincides with the Milnor number of the pencil foliation $\F_{f/g}$ defined by the form $\omega= fdg-gdf$. Furthermore, we state the following well-known result, whose proof is left to the reader.
\begin{lemma}\label{lemma_red}
Let \( f, g \in \mathcal{O}_{\mathbb{C}^2, 0} \) be germs vanishing at the origin and coprime. Then, the following statements are equivalent.
\begin{enumerate}
\item $0$ is an isolated singularity of $\omega$.
\item $\mu(f, g) = \mu_0(\mathcal{F}_{f/g})$.
\item every element of the pencil generated by \( f \) and \( g \) is reduced.
\end{enumerate}
\end{lemma}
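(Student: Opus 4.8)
The plan is to funnel all three conditions through a single algebraic object: the greatest common divisor of the coefficients of $\omega$. Write $\omega = f\,dg - g\,df = P\,dx + Q\,dy$ with $P = f g_x - g f_x$ and $Q = f g_y - g f_y$, and set $h = \gcd(P,Q)$. By definition, $0$ is an isolated singularity of $\omega$ exactly when the common zero set of $P$ and $Q$ reduces to $\{0\}$, that is, when $h$ is a unit; so condition (1) is equivalent to ``$h$ is a unit''. This reformulation is the hub through which I would connect (1) to (2) and to (3).

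For $(1)\Leftrightarrow(2)$ I would argue as follows. If $h$ is a unit then $\omega$ is already a reduced $1$-form defining $\mathcal F_{f/g}$, so $\mu_0(\mathcal F_{f/g}) = i_0(P,Q) = \mu(f,g)$, which is (2) (this is the implication already noted in the Introduction). Conversely, if $h$ is a non-unit then $P$ and $Q$ share the branch $\{h=0\}$ through the origin, so $i_0(P,Q)=\infty$ and hence $\mu(f,g)=\infty$; but $\mathcal F_{f/g}$ is defined by the reduced form $\omega/h$, whose Milnor number $\mu_0(\mathcal F_{f/g}) = i_0(P/h,Q/h)$ is finite. Thus (2) fails, and $(2)\Rightarrow(1)$ follows by contraposition.

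The substantial part is $(1)\Leftrightarrow(3)$, which I would obtain by computing the divisor of $h$ in terms of the non-reduced members of the pencil. The key structural fact is that $\omega$ is invariant, up to a nonzero scalar, under a linear change of basis of the pencil: if $(h_0,h_1)$ is any basis then $\omega = c\,(h_0\,dh_1 - h_1\,dh_0)$ with $c\in\C^*$. Given an irreducible germ $u$ dividing some member $h_0=\alpha f+\beta g$ with multiplicity $m$, I complete $h_0$ to a basis $(h_0,h_1)$; coprimality of $f,g$ forces $u\nmid h_1$. Writing $h_0 = u^m w$ with $u\nmid w$ and differentiating, a direct computation gives $P \equiv -c\,m\,u_x\,w\,h_1\,u^{m-1}$ and $Q \equiv -c\,m\,u_y\,w\,h_1\,u^{m-1}$ modulo $u^m$. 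Since $\{u=0\}$ is reduced, $u$ cannot divide both $u_x$ and $u_y$; as $u\nmid wh_1$ and $m\neq 0$, this shows that $u$ divides $h$ to order \emph{exactly} $m-1$.

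To close the equivalence I must show, conversely, that every irreducible factor of $h$ is a component of some pencil member. For this I would use the meromorphic first integral: $d(f/g) = -\omega/g^2$, so if $u\mid h$ then $\omega$, hence $d(f/g)$, vanishes along $\{u=0\}$ away from $\{g=0\}$; since $\{u=0\}$ is irreducible, $f/g$ is constant equal to some $c$ there, whence $u$ divides the member $f-cg$ (or $g$, if $c=\infty$), and this member is unique by coprimality. Combining the two directions gives $h = \prod_u u^{\,m_u-1}$, the product over irreducible components of pencil members with $m_u$ the multiplicity of $u$ in the member containing it. Hence $h$ is a unit if and only if every $m_u=1$, i.e. every member of the pencil is reduced, which is $(1)\Leftrightarrow(3)$. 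The hard part will be the local multiplicity computation, namely showing that the order of $h$ along $u$ is exactly $m-1$ rather than merely at least $m-1$; this hinges on the reducedness of $\{u=0\}$ guaranteeing that $u$ does not divide both partials $u_x,u_y$, and on carefully isolating the leading term of $P,Q$ modulo $u^m$.
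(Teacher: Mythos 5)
Your argument is correct and complete; note that the paper itself offers no proof to compare against (it explicitly leaves Lemma~\ref{lemma_red} to the reader), so the only question is whether your blind argument holds up, and it does. The reduction of all three conditions to the single statement ``$h=\gcd(P,Q)$ is a unit'' is the right organizing principle: $(1)\Leftrightarrow(2)$ is immediate once one recalls that $\mu_0(\mathcal F_{f/g})$ is by convention computed from the saturated form $\omega/h$ and is therefore always finite, while $i_0(P,Q)=\infty$ exactly when $h$ is a non-unit. The two halves of $(1)\Leftrightarrow(3)$ are both sound: the invariance $h_0\,dh_1-h_1\,dh_0=(\alpha\delta-\beta\gamma)\,\omega$ under change of basis is a two-line computation; the expansion $h_0\,dh_1-h_1\,dh_0=u^{m-1}\bigl(uw\,dh_1-m h_1 w\,du-u h_1\,dw\bigr)$ for $h_0=u^m w$ gives $\mathrm{ord}_u P,\ \mathrm{ord}_u Q\ge m-1$ with equality for at least one of them because an irreducible $u$ cannot divide both $u_x$ and $u_y$ (e.g.\ by Weierstrass preparation and a degree count) and $u\nmid wh_1$ by coprimality of $f$ and $g$; and the converse direction via $d(f/g)=-\omega/g^2$ correctly identifies every irreducible factor of $h$ as a component of a unique pencil member. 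The resulting identity $\mathrm{ord}_u h=m_u-1$ then closes the equivalence, and in particular shows for free that only finitely many members of the pencil can be non-reduced. The one point worth stating explicitly if you write this up is that $\omega\not\equiv 0$, which follows since $f=cg$ would contradict coprimality of two non-units.
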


Since the case $\mu(f,g)=\infty$ has already been addressed in \cite{Sa}, we  assume from now that $\mu(f,g)<\infty$. In particular, the foliation $\F = \F_{f/g}$  defined by the 1-form $\omega = fdg - gdf$ has an isolated singularity at the origin and  all elements of the pencil are reduced.

Before giving the proofs of Bifurcation formula we establish the following key lemmas. The first says that bifurcation fibers are those containing some isolated separatrix of $\F_{f/g}$  and the second one gives a way to construct balanced divisors for pencil type foliations.

\begin{lemma}\label{isolated separatrices}
A fiber of the pencil is a bifurcation fiber if and only if it contains an isolated separatrix.
\end{lemma}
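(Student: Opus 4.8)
The plan is to reduce everything to Corollary~\ref{cor}, which expresses the Milnor number of a fiber purely in terms of its intersection vector, so that the statement becomes a question about how $S_{C_c}$ varies with $c$. First I would fix the reduction $\pi\colon(M,E)\to(\mathbb C^2,0)$ of $\mathcal F=\mathcal F_{f/g}$. Since in this reduction the dicritical components are transverse to $\mathcal F$ away from the reduced corner singularities, the meromorphic map $f/g$ lifts to a \emph{morphism} $\Phi\colon M\to\mathbb P^1$: each dicritical component maps onto $\mathbb P^1$, while the invariant components are contracted to points. Consequently the isolated separatrices of $\mathcal F$ are exactly the branches of fibers whose strict transforms meet an invariant component of $E$. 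Writing $\Phi^{-1}(c)=\overline{C}_c+V_c$ with $V_c=\sum_j m_j E_j$ supported on the invariant components over $c$, and using that $\Phi^{-1}(c)\cdot E_i=0$ for invariant $E_i$ while $\Phi^{-1}(c)\cdot E_i=\deg(\Phi|_{E_i})$ for dicritical $E_i$, I would derive the key identity
\[
S_{C_c}=S_{\mathrm{gen}}-A\,\mathbf m_c,
\]
where $S_{\mathrm{gen}}$ is the common value of $S_{C_c}$ for generic $c$ and $\mathbf m_c=(m_j)_j$. Because $\mathbf m_c$ is supported on invariant components and $S_{\mathrm{gen}}$ on dicritical ones, $\langle \mathbf m_c,S_{\mathrm{gen}}\rangle=0$; and since $\overline{C}_c\neq\varnothing$ and $\Phi^{-1}(c)$ is connected, $\mathbf m_c\neq 0$ if and only if $\overline{C}_c$ meets some invariant component, i.e.\ if and only if $C_c$ contains an isolated separatrix.

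For the implication ``bifurcation $\Rightarrow$ isolated separatrix'' I would argue by contraposition: if $C_c$ has no isolated separatrix then $\mathbf m_c=0$, hence $S_{C_c}=S_{\mathrm{gen}}$, and Corollary~\ref{cor} gives $\mu_0(C_c)=\mu_0(C_{\mathrm{gen}})=\mu_{\mathrm{gen}}$, so $C_c$ is not a bifurcation fiber.

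For the converse ``isolated separatrix $\Rightarrow$ bifurcation'' I have $\mathbf m_c\neq 0$ and $S_{C_c}\neq S_{\mathrm{gen}}$. Substituting $S_{C_c}=S_{\mathrm{gen}}-A\mathbf m_c$ into Corollary~\ref{cor}, using $\langle\mathbf m_c,S_{\mathrm{gen}}\rangle=0$, the relation $-A=F^{\mathsf T}F$, and the computation $A(I+F^{-1})u=(\mathrm{val}_{E_i}-2)_i$ (which follows from Lemma~\ref{aux}), I obtain the excess formula
\[
\mu_0(C_c)-\mu_{\mathrm{gen}}=-\langle\mathbf m_c,A\mathbf m_c\rangle+\langle (I+F^{-1})u,\,A\mathbf m_c\rangle=\sum_{E_i\ \mathrm{inv}} m_i\bigl(\overline{C}_c\cdot E_i+\mathrm{val}_{E_i}-2\bigr).
\]
Upper semicontinuity of $c\mapsto\mu_0(C_c)$ already yields that this excess is $\ge 0$, and what remains is to prove it is strictly positive. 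I would isolate the positive-definite part $-\langle\mathbf m_c,A\mathbf m_c\rangle=\|F\mathbf m_c\|^2=\sum_i m_i(\overline{C}_c\cdot E_i)$, which is $>0$ precisely because $-A$ is positive definite and $\overline{C}_c$ meets the invariant part of the fiber, and then control the remaining linear term $\sum_i m_i(\mathrm{val}_{E_i}-2)$ through the numerical fiber conditions $\Phi^{-1}(c)\cdot E_i=0$.

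The main obstacle is exactly this strict positivity. The individual summands in the formula above need \emph{not} be positive — a terminal invariant component carrying no attached separatrix contributes $-m_i$ — so a term-by-term estimate fails and the argument must be global: one has to use the connectedness of $\Phi^{-1}(c)$ together with the conditions $\Phi^{-1}(c)\cdot E_i=0$ to show that the positive contribution coming from the components where $\overline{C}_c$ attaches always dominates the negative contributions along the chains. Equivalently, upper semicontinuity reduces the problem to excluding the boundary case $\mu_0(C_c)=\mu_{\mathrm{gen}}$ with $\mathbf m_c\neq 0$, which I expect to rule out by the combinatorics of the dual graph of the fiber; this is the step I anticipate requiring the most care.
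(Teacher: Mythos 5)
Your first half (a fiber with no isolated separatrix is not a bifurcation fiber) is sound and is essentially the paper's own argument: the paper also fixes the reduction of $\mathcal F_{f/g}$, observes that every branch of such a fiber meets $E$ transversely and only in dicritical components, matches these branches with those of a nearby generic fiber to get $S_F=S_{\mathrm{gen}}$, and concludes equality of Milnor numbers. Your identity $S_{C_c}=S_{\mathrm{gen}}-A\,\mathbf m_c$ with $\mathbf m_c=0$ is a correct repackaging of this. One small inaccuracy: $\Phi^{-1}(c)$ need not be connected (already the generic fiber is a disjoint union of branches upstairs), but you do not actually need connectedness, since the implication ``$\mathbf m_c\neq0\Rightarrow\overline C_c$ meets an invariant component'' follows from negative definiteness of $A$ via $0<-\langle\mathbf m_c,A\mathbf m_c\rangle=\sum_i m_i(\overline C_c\cdot E_i)$, as you yourself note.

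The genuine gap is the converse, and you have flagged it: the excess formula $\mu_0(C_c)-\mu_{\mathrm{gen}}=\sum_i m_i\bigl(\overline C_c\cdot E_i+\mathrm{val}_{E_i}-2\bigr)$ is correctly derived (I checked the computation $A(I+F^{-1})u=(\mathrm{val}_{E_i}-2)_i$ against Lemma~\ref{aux}), but the lemma is not proved until this quantity is shown to be strictly positive whenever $\mathbf m_c\neq0$, and the term-by-term estimate fails exactly as you describe. The paper sidesteps this combinatorial problem entirely: a fiber containing an isolated separatrix has a branch attached to an invariant component of $E$, hence is not equisingular with the generic fiber (all of whose branches meet dicritical components), and Corollary~7.3.9 of Casas-Alvero then gives $\mu_0(C_c)>\mu_{\mathrm{gen}}$ directly, since semicontinuity of $\mu$ is an equality precisely in the equisingular case. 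That appeal to equisingularity theory is the missing idea. If you insist on keeping your route self-contained, you must supply the global positivity argument, e.g.\ by combining the fiber conditions $\Phi^*(c)\cdot E_i=0$ with the fact that the support of $\mathbf m_c$ is a union of subtrees of the dual graph, so that $\sum_i m_i(\mathrm{val}_{E_i}-2)$ can be controlled by an Euler-characteristic count; as written, the statement is not established.
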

\begin{proof}
Let \( \pi: X \to (\mathbb{C}^2, 0) \) be a reduction of singularities for the foliation \( \mathcal{F}_{f/g} \), let \( F \) be a fiber without isolated components, and let \( \overline{F} \) denote its strict transform. Then every branch of \( \overline{F} \)  intersects \( E \) transversely and only in dicritical components. Therefore, if \( G \) is a generic fiber close enough to \( F \), there exists a one-to-one correspondence between branches of \( \overline{G} \) and branches of \( \overline{F} \) in such a way that corresponding branches meet transversely the same dicritical component of the exceptional divisor. Thus \( S_F = S_G \) and hence they have the same Milnor number.

Reciprocally, if $F$ is a fiber that contains some isolated separatrix then $F$ is not equisingular with the generic fiber. It follows from \cite[Corollary 7.3.9]{Casas} that $\mu_0(F) >\mu_{gen}$.
\end{proof}

\begin{lemma}\label{balanceado por fibras}
Let $h_1, \ldots , h_r$ be the bifurcation fibers of the pencil $\F_{f/g}$ and assume that $f$, $g$ and $\varphi_1, \ldots, \varphi_r$ are generic fibers. Then the divisor $\B=(h_1)+\cdots+(h_r)+(f)+(g) -(\varphi_1)-\cdots - (\varphi_r)$ is balanced.
\end{lemma}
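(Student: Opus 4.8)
The plan is to verify directly the three defining conditions (a), (b), (c) of a balanced divisor for $\B=\sum_i(h_i)+(f)+(g)-\sum_j(\varphi_j)$, using throughout two elementary features of a pencil: every irreducible invariant curve of $\F_{f/g}$ is a branch of a unique member $\{\alpha f+\beta g=0\}$ (since $f/g$ is constant along leaves), and two distinct members share no branch (by coprimality of $f,g$). Consequently every separatrix $B$ receives in $\B$ the coefficient $+1$, $-1$ or $0$ according to whether it is a branch of one of the $h_i,f,g$, of some $\varphi_j$, or of none of them; in particular $a_B\in\{-1,0,1\}$. By Lemma~\ref{isolated separatrices} the isolated separatrices are exactly the branches contained in the bifurcation members $h_1,\dots,h_r$, so each of them appears in $\B$ with coefficient $+1$, which gives (a); and since $f,g,\varphi_1,\dots,\varphi_r$ are generic they contain no isolated separatrix, so every branch carrying the coefficient $-1$ (a branch of some $\varphi_j$) is dicritical, which gives (b).

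The heart of the proof is condition (c). Fix a dicritical component $D$ and set $v=\mathrm{val}_D$. Writing $\Phi=\pi^{*}(f/g)\colon M\to\mathbb{P}^1$ for the holomorphic map resolving the pencil, $D$ is dicritical precisely because $\Phi|_D\colon D\cong\mathbb{P}^1\to\mathbb{P}^1$ is non-constant, of some degree $d_D=\overline F_{\mathrm{gen}}\cdot D$. Since distinct dicritical components are disjoint (as used in the proof of Theorem~\ref{muF}), all $v$ neighbours of $D$ are invariant; I denote the corners by $q_1,\dots,q_v$, with $q_s=D\cap E_{k_s}$ and value $c_{k_s}=\Phi(q_s)$. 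Using the projection formula $\pi^{*}F\cdot D=0$ for each member $F$, the number $\overline F_c\cdot D$ depends on $c$ only through the multiplicities of $F_c$ along the invariant neighbours, and I will show that $\overline F_c\cdot D=d_D-\sum_{s:\,c_{k_s}=c}e_{q_s}$, where $e_{q_s}$ is the ramification index of $\Phi|_D$ at $q_s$: a generic member meets $D$ transversally in $d_D$ points, while over a corner value the ramification is absorbed by the invariant neighbour. Identifying the drop with $e_{q_s}$ uses that no separatrix of a generalized curve has its strict transform through a corner, so $\overline F_{c_{k_s}}$ avoids $q_s$.

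With this formula the computation is immediate: summing $\overline F_c\cdot D$ over the signed members of $\B$, the constant terms $d_D$ combine with coefficient equal to the signed number of members, namely $r+2-r=2$, producing $2d_D$, while the corrections $-\sum_{s:\,c_{k_s}=c}e_{q_s}$ survive only on the bifurcation members. Here I use that, by genericity, the values of $f,g,\varphi_j$ are not corner values (so in particular no invariant component carries the value $0$ or $\infty$, by Lemma~\ref{isolated separatrices}), and that every corner value $c_{k_s}$ is a bifurcation value, realised by exactly one $h_i$; hence the total correction is $\sum_{s=1}^v e_{q_s}$ and $\overline{\B}\cdot D=2d_D-\sum_{s=1}^v e_{q_s}$. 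Applying Riemann--Hurwitz to $\Phi|_D$, that is $\sum_{s=1}^v(e_{q_s}-1)=2d_D-2$, gives $\overline{\B}\cdot D=2-v=2-\mathrm{val}_D$, which is exactly condition (c).

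The step I expect to be the main obstacle, and on which the whole argument for (c) rests, is the claim that $\Phi|_D$ ramifies \emph{only} at the corners of $D$; equivalently, that $\F_{f/g}$ has no tangency with $D$ at a smooth non-corner point. This is precisely where the standing hypothesis $\mu(f,g)<\infty$ (all members reduced, Lemma~\ref{lemma_red}) becomes indispensable, since without it the Riemann--Hurwitz balance above acquires an extra contribution $\sum(e_p-1)>0$ from non-corner ramification points. A non-corner ramification point is a point where the resolved foliation is regular but tangent to $D$; blowing it up produces a new invariant component and turns it into a corner, and tracing such a tangency through the blow-downs forces a member of the pencil to acquire a multiple component, contradicting reducedness. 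Making this implication precise---ruling out non-corner ramification under the reducedness hypothesis---is the only non-formal point; once it is granted, conditions (a), (b) together with the Riemann--Hurwitz count complete the verification that $\B$ is balanced.
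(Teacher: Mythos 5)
Your argument is essentially the paper's: both verify condition (c) by restricting the resolved pencil map to each dicritical component $D$, counting intersections of strict transforms of members with $D$ in terms of ramification at the corners, and closing the computation with Riemann--Hurwitz; your identity $\overline F_c\cdot D=d_D-\sum_{c_{k_s}=c}e_{q_s}$ is exactly the paper's $\deg(F_D)=k_j+\sum_i m(F_D,q_i^j)$, and conditions (a), (b) are handled via Lemma~\ref{isolated separatrices} as you do. Two points need to be supplied. First, your assertion that \emph{every} corner value of $D$ is a bifurcation value does not follow from genericity alone: one must know that each invariant component $I$ of $E$ adjacent to $D$ carries the strict transform of an isolated separatrix, so that the member containing it is a bifurcation fiber by Lemma~\ref{isolated separatrices}; the paper gets this from the separatrix existence results of Ortiz-Bobadilla--Rosales-Gonz\'alez--Voronin and Camacho--Rosas. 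Without it, a corner could a priori lie over a generic value, its ramification would not be absorbed by any $h_i$, and the count would fail. Second, the step you flag yourself --- that $\Phi|_D$ ramifies only at the corners --- is indeed required (the paper uses it implicitly when it asserts $m(F_D,p_i^j)=1$ and writes Riemann--Hurwitz with the sum running only over the $q_i^j$), and your sketch is the correct justification: at a non-corner point the reduced model is transverse to $D$, so $\Phi$ factors locally through the leaf coordinate, and ramification there would force the corresponding member to be non-reduced along the attached dicritical separatrix, contradicting $\mu(f,g)<\infty$ via Lemma~\ref{lemma_red}. With these two justifications added, your proof is complete and coincides with the paper's.
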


\begin{proof}
Let \( \pi: X \to (\mathbb{C}^2, 0) \) be a reduction of singularities for the foliation \( \mathcal{F}_{f/g} \). Then the rational map \( F := (f/g) \circ \pi : X \to \mathbb{P}^1 \) has no indeterminacy points. We can write $(h_j) = (f/g)^{-1}(c_j), \quad j = 1, \ldots, r,$ where each \( c_j \in \mathbb{C}^* \), since \( f \) and \( g \) define generic fibers of the pencil. Observe that $F$ contracts all invariant divisors and, for any dicritical divisor $D$, the map $F_D =F|_{D}:D \to \mathbb{P}^1$ is dominant. Then we can write $F_D^{-1}(c_j) = \{p_1^j, \ldots, p_{k_j}^j, q_1^j, \ldots, q_{\ell_j}^j\}$, where $p_i^j$ are regular points and $q_i^j$ are corners in $D$. In particular $m(F_D, p_i^j)=1$ and $m(F_D, q_i^j) \geq 1$. Thus, for each $j=1, \ldots, r$
\begin{equation}\label{multiplicidad en D}
\deg(F_D)= k_j + \sum_{i=1}^{\ell_j} m(F_D, q_i^j).
\end{equation}
By the Riemann-Hurwitz theorem applied to $F_D$ we also have
$$
2=2\deg(F_D) - \sum_{i,j}(m(F_D, q_i^j)-1)=2\deg(F_D) - \sum_{i,j}m(F_D, q_i^j) + \ell_1 + \cdots + \ell_r.
$$
Since the points $q_i^j$ are different corners in $D$ we have that $\mr{val}_D \leq \ell_1 + \cdots + \ell_r $. On the other hand, let $q$ be any corner in $D$ and let us show that $q$ belongs to $\{q_i^j\}$ --- this will mean that $\mr{val}_D = \ell_1 + \cdots + \ell_r $. The point $q$ belongs to an invariant irreducible component $I$ of the exceptional divisor. It is known (see \cite{O-B,Rosas}) that there is an isolated separatrix whose strict transform meets $I$. Thus, by Lemma~\ref{balanceado por fibras}, this separatrix is contained in a bifurcation fiber $(h_j)$. So, we conclude that $q \in I \subset F^{-1}(c_j)$ and therefore $q \in \{q_i^j\}$ . So that
\begin{equation}\label{Riemann-Hurwitz}
2\deg(F_D) - \sum_{i,j}m(F_D, q_i^j) =2 - \mr{val}_D.
\end{equation}
Finally, denoting by $Sep(D)$ the set of separatrices of $\F$ whose strict transforms meet $D$, we have
\begin{align*}
\sum_{B \in Sep(D)} a_B &= k_1 + \cdots+ k_r + (2-r) \deg(F_D)\\
& \underset{\text{eq.} (\ref{multiplicidad en D})}{=} 2 \deg(F_D) - \sum_{i,j}m(F_D, q_i^j)\\
& \underset{\text{eq.} (\ref{Riemann-Hurwitz})}{=} 2-\mr{val}_D,
\end{align*}
which proves that $\B$ is balanced.
\end{proof}

\begin{example}(Genzmer)\label{Ejemplo-Genzmer}
Consider the pencil, absolutely dicritical defined by $f = xy+ y^2 + x^3=f_1f_2$ and $g=xy=g_1g_2$. Then the only bifurcation fiber is $h=f-g = y^2+x^3$ with $\mu(h) =2 $ and $\mu_{gen}=1$. Its desingularization is $\pi=\pi_3\circ\pi_2\circ\pi_1$, where $\pi_1$ is the blow-up of the origin $0$,
$\pi_2$ is the blow-up of the intersection point $p_1$ of $E_1^1=\pi_1^{-1}(0)$ with the strict transform of $y=0$ and $\pi_3$ is the blow-up of the point $p_2=E_1^2\cap E_2^2$, $E_1^2$ is the strict transform of $E_1^1$ and $E_2^2=\pi_2^{-1}(p_1)$. The exceptional divisor is $E=\pi^{-1}(0)=E_1\cup E_2\cup E_3$, where $E_1=E_1^3$ and $E_2=E_2^3$ are the strict transforms of $E_1^2$ and $E_2^2$ by $\pi_3$ and $E_3=E_3^3=\pi_3^{-1}(p_2)$. Thus, the matrix $F$ and the intersection matrix $A$ associated to $\pi$ are 
\[F=\left(\begin{array}{rrr}
1 & 0 & 0\\
-1 & 1 & 0\\
-1 & -1 & 1\end{array}\right)\quad\text{and}\quad A=-F^{\mathsf{T}}F=\left(\begin{array}{rrr}-3 & 0 & 1\\ 0 & -2 & 1\\ 1 & 1 & -1\end{array}\right).\]
The dual graph of the desingularization is the following:
\begin{center}
\begin{tikzpicture}[scale=1.2]
\draw (-1,0) to (1,0);
\foreach \i in {-1,0,1}{\draw[->] (\i,0) to (\i,1);\draw[fill,white] (\i,0) circle [radius=0.1];\draw (\i,0) circle [radius=0.1];}
\draw [fill,black] (0,0) circle [radius=0.1];
\node [above] at (-1,1) {$g_1$};
\node [above] at (0,1) {$h$};
\node [above] at (1,1) {$g_2$};
\node [below] at (-1,-.1) {$E_1$};
\node [below] at (0,-.1) {$E_3$};
\node [below] at (1,-.1) {$E_2$};
\end{tikzpicture}
\end{center}
Each dicritical divisor $D\in \{E_1, E_3\}$ has valence $\mr{val}_D = 1$. Thus the divisor $\B= (h) + (f) + (g) - (\varphi)$ is balanced, where $\varphi = \varphi_1 \varphi_2$ is a generic fiber and $f_1, \varphi_1$ (respectively $f_2, \varphi_2$) comes from separatrices passing thought $E_1$ (respectively $E_3$). In fact
$$
\sum_{B\in Sep(D)}a_B = 1+ 1 - 1 = \mr{val}_D.
$$
\end{example}

\begin{example}\label{ejemplo mald.} (Szawlowski): Take the pencil generated by
$f=(x^3+y^5)+y(y^2-3x^2)=f_1 f_2 f_3$ and $g=y(y^2-3x^2)=g_1 g_2 g_3$. Then bifurcation fibers are $h_1=f-g$, $h_2=2f - g$ and $h_3=2f-3g$ with $\mu(h_1)=8$, $\mu(h_2)= \mu(h_3) = 6$ and $\mu_{gen}=4$. The dual graph of the desingularization is the following: 
\begin{center}
\begin{tikzpicture}[scale=1.2]
\draw (-2,0) to (1,0);
\foreach \i in {-2,...,0}{\draw [fill,black] (\i,0) circle [radius=0.1];}
\draw (1.5,.75) to (1,0) to (1.5,-.75);
\draw [fill,black] (1.5,.75) circle [radius=0.1];
\draw [fill,black] (1.5,-.75) circle [radius=0.1];
\draw[->] (-1,0) to (-1,1);
\draw[->] (1.5,0.75) to (2.5,1); 
\draw[->] (1.5,0.75) to (2.5,.5); 
\draw[->] (1.5,-0.75) to (2.5,-1); 
\draw[->] (1.5,-0.75) to (2.5,-.5); 
\node[above] at (-1,1) {$h_1$};
\node[right] at (2.5,.75) {$h_2$};
\node[right] at (2.5,-.75) {$h_3$};
\draw[->] (1,0) to (2,.25);
\draw[->] (1,0) to (2,-.25);
\draw[->] (1,0) to (2,0);
\node [right] at (2,0) {$f,g$};
\node [below] at (-2,-0.1) {$E_2$};
\node [below] at (-1,-0.1) {$E_4$};
\node [below] at (0,-0.1) {$E_3$};
\node [below] at (1,-0.1) {$E_1$};
\node [left] at (1.4,0.75) {$E_5$};
\node [left] at (1.4,-0.75) {$E_6$};
\draw [fill,white] (1,0) circle [radius=0.1];
\draw  (1,0) circle [radius=0.1];
\end{tikzpicture}
\end{center}
We have only one dicritical component $E_1$ with $\mr{val}_{E_1} = 3$ and $\B = (h_1)+(h_2)+(h_3) + (f) + (g) - (\varphi_1)- (\varphi_2) - (\varphi_3)$. 
Observe that in this case the strict transform of the generic fiber meets $E_1$ in three points, the strict transform of the bifurcation fiber $h_1$ only meets $E_4$ once and the strict transform of the bifurcation fiber $h_2$ (resp. $h_3$) meets twice  $E_5$ (resp. $E_6$) and once the dicritical component $E_1$.
Thus
$$
\sum_{B\in Sep(E_1)}a_B= \underset{h_2}{\underbrace{1}}+ \underset{h_3}{\underbrace{1}}+ \underset{f}{\underbrace{3}}+ \underset{g}{\underbrace{3}} - \underset{\varphi_1}{\underbrace{3}} - \underset{\varphi_2}{\underbrace{3}} - \underset{\varphi_3}{\underbrace{3}} = 2 - \mr{val}_{E_1}.
$$
\end{example}

\subsection{First proof of the Bifurcation Formula}

Now we can state the main result of this section.

\begin{theorem}[Bifurcation formula]\label{Bifurcation}
Assume that $f,g\in\C\{x,y\}$ both vanish at the origin and are coprime. Then
\begin{equation}\label{Bif}
\displaystyle\mu(f,g)=\mu_0(fg)+\sum_{[\alpha:\beta]\in\mathcal{B}(f,g)^{*}}\left(\mu_0(\alpha f +\beta g)-\mu_{gen}(f,g)\right)
\end{equation}
\end{theorem}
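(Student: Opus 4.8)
The plan is to compute $\mu(f,g)=\mu_0(\F)$ through a single balanced divisor and to evaluate the contribution of each fiber of the pencil separately. First I would record that $\F=\F_{f/g}$ is a generalized curve: after reduction, every reduced singularity of $\pi^*\F$ carries the meromorphic first integral induced by $f/g$, and a saddle-node admits no meromorphic first integral, so no saddle-nodes occur. Hence Proposition~\ref{mil = intmil} applies and $\mu(f,g)=\mu_0(\F)=\mu_0(\F,\B)$ for every balanced divisor $\B$.

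The key new ingredient I would isolate is that the $GSV$ index is constant along the pencil: for every fiber $C$ of $\F$,
\[GSV_0(\F,C)=i_0(f,g).\]
To prove it, write $C=\{\phi=0\}$ with $\phi=\alpha f+\beta g$ and pick a second fiber $\psi=\alpha' f+\beta' g$ with $\alpha\beta'-\alpha'\beta=1$; since $\phi,\psi$ arise from $f,g$ by a linear change of determinant $1$, covariance of $f\,dg-g\,df$ gives $\omega=\phi\,d\psi-\psi\,d\phi$ (up to an overall sign), so the decomposition \eqref{descomposicion} holds with multiplier $1$, $h=-\psi$ and $\eta=d\psi$. Therefore $GSV_0(\F,C)=\ord_0\big((-\psi)|_C\big)=i_0(\phi,\psi)$, and since $(\phi,\psi)=(f,g)$ as ideals whenever the two fibers are distinct, $i_0(\phi,\psi)=i_0(f,g)$ independently of $C$. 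Combined with Remark~\ref{GSV vs Milnor} this yields the clean identity $\mu_0(\F,C)=i_0(f,g)+\mu_0(C)$ for every fiber $C$, and in particular $\mu_0(\F,\varphi)=i_0(f,g)+\mu_{\mathrm{gen}}$ for every generic fiber $\varphi$.

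Next I would apply Lemma~\ref{balanceado por fibras}, filling its two generic slots with auxiliary generic fibers $\varphi_0,\varphi_\infty$ (so that the actual $f,g$ need not be generic), to obtain the balanced divisor $\B=\sum_{k=1}^{b}(H_k)+(\varphi_0)+(\varphi_\infty)-\sum_{k=1}^{b}(\varphi_k)$, where $H_1,\dots,H_b$ enumerate all bifurcation fibers and the $\varphi$'s are generic. Expanding $\mu_0(\F,\B)$ by the linearity \eqref{eq:Milnor along a divisor} (here the total coefficient is $b+2-b=2$) and substituting the two identities of the previous paragraph, the $i_0(f,g)$ terms collapse and a short computation yields the master formula
\[\mu(f,g)=2\,i_0(f,g)+2\mu_{\mathrm{gen}}-1+\sum_{H\in\B(f,g)}\big(\mu_0(H)-\mu_{\mathrm{gen}}\big),\]
the sum running over all bifurcation fibers.

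It remains to rewrite this as \eqref{Bif}. Splitting off from the sum the two terms $\mu_0(f)-\mu_{\mathrm{gen}}$ and $\mu_0(g)-\mu_{\mathrm{gen}}$ --- each of which vanishes precisely when $f$, respectively $g$, is not a bifurcation fiber, so that the remaining sum is exactly over $\B(f,g)^{*}$ --- and using $\mu_0(fg)=\mu_0(f)+\mu_0(g)+2i_0(f,g)-1$, the master formula collapses uniformly to $\mu(f,g)=\mu_0(fg)+\sum_{[\alpha:\beta]\in\B(f,g)^{*}}(\mu_0(\alpha f+\beta g)-\mu_{\mathrm{gen}})$ in all cases. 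I expect the main obstacle to be the $GSV$-constancy lemma, which is the genuinely new computation; after that the argument is bookkeeping, the only delicate point being the uniform treatment of the cases in which $f$ or $g$ is itself a bifurcation fiber, which the final splitting handles automatically.
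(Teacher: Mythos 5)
Your argument is correct and coincides in substance with the paper's second proof of Theorem~\ref{Bifurcation}: the same balanced divisor supplied by Lemma~\ref{balanceado por fibras}, Proposition~\ref{mil = intmil}, the linearity \eqref{eq:Milnor along a divisor}, and the constancy $GSV_0(\F,C_{\alpha,\beta})=i_0(f,g)$ of Proposition~\ref{prop_1} combined with Remark~\ref{GSV vs Milnor}. The only differences are cosmetic: you establish the GSV constancy by exhibiting the decomposition \eqref{descomposicion} explicitly instead of changing generators, and you carry out in detail the reduction to generic generators (and the check that $\F_{f/g}$ is a generalized curve) that the paper dispatches in a sentence.
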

\begin{proof}
Since the equality does not depend on the generators, we can assume that $f$ and $g$ are generic fibers of the pencil. By Lemma~\ref{balanceado por fibras}
\[\mc B=\sum_{i=1}^r(h_i)+(f)+(g)-\sum_{i=1}^r(\varphi_i)\] 
is a balanced divisor of $\F$. Consider a reduction of singularities of the pencil, let $A$ and $F$ be the corresponding intersection and Cholesky-type matrices and define the function 
\[M(S):=\langle -A^{-1}S,S\rangle-\langle S,(I+F^{-1})u\rangle+1.\]
We remark that if $h$ is any invariant curve then $M(S_h) = \mu_0(h)$. Moreover, notice  that  $M(S_{\B})=\mu_0(\F_{f/g})=\mu(f,g)$, by Theorem \ref{muF}, and $\mu(fg)=M(S_{fg})=M(2S_g)$. Then the bifurcation formula (\ref{Bif}) is equivalent to
\begin{equation}\label{FM}
M\Big(\sum_{i=1}^rS_{h_i}+(2-r)S_g\Big)=M(2S_g)+\sum_{i=1}^r(M(S_{h_i})-M(S_g)),
\end{equation}
A direct computation leads to the equivalent equality
\begin{equation*}\label{i0}
2\sum_{1\le i<j\le r}\underbrace{\langle -A^{-1}S_{h_i},S_{h_j}\rangle}_{h_i\cdot h_j}
+2(2-r)\sum_{i=1}^r\underbrace{\langle -A^{-1}S_{h_i},S_g\rangle}_{h_i\cdot g}+r(r-3)\underbrace{\langle -A^{-1}S_g,S_g\rangle}_{f \cdot g}
=0,
\end{equation*}
which holds because $i_0(h_i,h_j)=i_0(h_i,g)=i_0(f,g).$
\end{proof}

\subsection{Second proof of the Bifurcation Formula}

We now present a second proof of Theorem \ref{Bifurcation} which is an application of Proposition \ref{mil = intmil}. We start with a proposition.
\begin{proposition}\label{prop_1}
Assume that $f,g\in \mathcal O_{\C^2,0}$ both vanish at the origin and are coprime. Let $C_{\alpha,\beta}:\alpha\cdot f+\beta\cdot g=0$, then
\[GSV_0(\F_{f/g},C_{\alpha,\beta})=i_0(f,g). \]
\end{proposition}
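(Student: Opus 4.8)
The plan is to apply Theorem~\ref{GSV} directly to the curve $C_{\alpha,\beta}$, using the balanced divisor $\B$ constructed in Lemma~\ref{balanceado por fibras}. Recall that Theorem~\ref{GSV} gives
\[
GSV_0(\F_{f/g},C_{\alpha,\beta})=\langle -A^{-1}(S_{\B}-S_{C_{\alpha,\beta}}),S_{C_{\alpha,\beta}}\rangle,
\]
so the task reduces to computing the intersection vector $S_{C_{\alpha,\beta}}$ and comparing it against $S_{\B}$. The guiding principle is the one already exploited in the first proof of the Bifurcation formula: for any two fibers $h,h'$ of the pencil one has $\langle -A^{-1}S_h,S_{h'}\rangle=h\cdot h'=i_0(f,g)$, since distinct fibers of a pencil meet only at the origin and their intersection multiplicity equals the generic one $i_0(f,g)$ (this is where Corollary~\ref{cor1} identifies the quadratic form $\langle -A^{-1}S_h,S_{h'}\rangle$ with the intersection number $i_0$).

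First I would write $\B=\sum_{i=1}^r(h_i)+(f)+(g)-\sum_{i=1}^r(\varphi_i)$ as in Lemma~\ref{balanceado por fibras}, so that $S_{\B}=\sum_{i=1}^r S_{h_i}+S_f+S_g-\sum_{i=1}^r S_{\varphi_i}$ by linearity of $S$. Since every term appearing in $\B$ is a fiber of the pencil, I would pair $S_{\B}$ with $S_{C_{\alpha,\beta}}$ using the bilinear form $-A^{-1}$ and apply the identity $\langle -A^{-1}S_{\text{fiber}},S_{\text{fiber}'}\rangle=i_0(f,g)$ term by term. Because the coefficients of $\B$ sum to $r+2-r=2$ (equivalently $\B$ is a degree-$2$ combination of fibers in the homology of the pencil), the pairing $\langle -A^{-1}S_{\B},S_{C_{\alpha,\beta}}\rangle$ collapses to $2\,i_0(f,g)$, while $\langle -A^{-1}S_{C_{\alpha,\beta}},S_{C_{\alpha,\beta}}\rangle=i_0(f,g)$ since $C_{\alpha,\beta}$ is itself a fiber. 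Subtracting yields
\[
GSV_0(\F_{f/g},C_{\alpha,\beta})=2\,i_0(f,g)-i_0(f,g)=i_0(f,g),
\]
as claimed.

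The one point that requires care is the status of $C_{\alpha,\beta}$ when $[\alpha:\beta]$ is itself a bifurcation value: I must make sure that $C_{\alpha,\beta}$ is an $\F$-invariant reduced divisor so that Theorem~\ref{GSV} applies, which holds because every element of the pencil is reduced under our standing assumption $\mu(f,g)<\infty$ (Lemma~\ref{lemma_red}), and every fiber is invariant by construction of $\F_{f/g}$. The main obstacle, and the step I would justify most carefully, is the clean verification that $\langle -A^{-1}S_h,S_{h'}\rangle=i_0(f,g)$ holds uniformly for \emph{all} pairs of fibers --- including bifurcation fibers and the generic fibers $\varphi_i$ --- rather than only for generic ones; this is exactly the content of Corollary~\ref{cor1} combined with the fact that any two distinct members of a pencil of plane curves intersect at the origin with multiplicity equal to the generic intersection number $i_0(f,g)$. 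Once this uniform intersection identity is in hand, the computation is a short linear-algebra bookkeeping exercise driven entirely by the coefficient sum of the balanced divisor.
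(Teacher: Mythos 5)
Your route through Theorem~\ref{GSV} and the balanced divisor of Lemma~\ref{balanceado por fibras} is genuinely different from the paper's proof, which is a two-line direct argument: for $\omega=f\,dg-g\,df$ the decomposition \eqref{descomposicion} along $C=\{f=0\}$ is immediate (take $h=-g$ and multiplier $1$), so $GSV_0(\F,\{f=0\})=\operatorname{ord}_0(g)|_{\{f=0\}}=i_0(f,g)$ straight from the definition, and the general fiber is handled by choosing $\alpha f+\beta g$ and $g$ as new generators of the same pencil. Your approach can be made to work, but as written it contains a genuine error precisely in the case you flag as delicate, namely when $C_{\alpha,\beta}$ is a bifurcation fiber. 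Corollary~\ref{cor1} identifies $\langle -A^{-1}S_h,S_{h'}\rangle$ with $i_0(h,h')$ only for \emph{coprime} curves; it says nothing about the self-pairing $\langle -A^{-1}S_C,S_C\rangle$. For a generic fiber the self-pairing does equal $i_0(f,g)$, but only because $S_C=S_\varphi$ for a second generic fiber $\varphi$ coprime to $C$, which lets you rewrite it as a pairing of distinct fibers. For a bifurcation fiber this fails: by Corollary~\ref{cor} one has $\langle -A^{-1}S_{h_i},S_{h_i}\rangle=\mu_0(h_i)+\langle (I+F^{-1})u,S_{h_i}\rangle-1$, and since $S_{h_i}\neq S_\varphi$ there is no reason for this to equal $i_0(f,g)$. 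Concretely, in Example~\ref{Ejemplo-Genzmer} one computes $i_0(f,g)=5$, $S_h=(0,0,1)^{\mathsf{T}}$ and $\langle -A^{-1}S_h,S_h\rangle=6$; correspondingly $\langle -A^{-1}S_{\B},S_h\rangle=6+5+5-5=11$, not $2\,i_0(f,g)=10$, because the summand $(h)$ of $\B$ contributes the self-pairing rather than $i_0(f,g)$. So both of your intermediate identities are false for bifurcation fibers; they happen to cancel in the difference ($11-6=5$), so your final answer is correct, but the stated justification is not valid.

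The repair is a different bookkeeping: write $GSV_0(\F,C_{\alpha,\beta})=\langle -A^{-1}(S_{\B}-S_{C_{\alpha,\beta}}),S_{C_{\alpha,\beta}}\rangle$ and observe that $\B-C_{\alpha,\beta}$ is a combination of fibers whose coefficients sum to $1$. If $C_{\alpha,\beta}$ occurs in $\B$ (a bifurcation fiber, or $f$ or $g$), it cancels and every remaining term is a fiber coprime to $C_{\alpha,\beta}$, so Corollary~\ref{cor1} gives $i_0(f,g)$ term by term; if it does not occur, the only self-pairing left is that of the generic fiber $C_{\alpha,\beta}$ itself, which is handled via $S_{C_{\alpha,\beta}}=S_\varphi$ as above. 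You should also note explicitly that Corollary~\ref{cor1} is being applied with the matrix $A$ of the reduction of $\F_{f/g}$, which is legitimate because after eliminating the indeterminacy of $f/g$ the strict transforms of distinct fibers are smooth, disjoint and transverse to $E$, so this reduction resolves every product of two distinct fibers (all fibers being reduced by Lemma~\ref{lemma_red}). With these two corrections your argument is a valid, if much heavier, alternative to the paper's proof.
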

\begin{proof}
First of all note that (from definition of GSV-index) it is clear that $GSV_0(\F,\{f=0\})=i_0(f,g)$. On the other hand, we can choose $\alpha\cdot f+\beta\cdot g $ and $g$ as generators of the pencil, thus $GSV_0(\F_{f/g},C_{\alpha,\beta})=i_0(\alpha\cdot f+\beta\cdot g,g)=i_0(f,g)$ and the result follows.
\end{proof}
\begin{remark}\label{IntMil-GSV}
It follows from Remark \ref{GSV vs Milnor} that  Proposition~\ref{prop_1} is equivalent to the equality $\mu_0\left(\F_{f/g},C_{\alpha,\beta}\right)=i_0(f,g)+\mu_0(C_{\alpha,\beta})$.
\end{remark}

\noindent \textit{Second proof of the Bifurcation formula.} Once again, from Lemma \ref{balanceado por fibras} we can consider the balanced divisor $\B$ given by
$$
\B=(h_1)+\cdots+(h_r)+(f)+(g) -(\varphi_1)-\cdots - (\varphi_r),
$$
where $h_1, \ldots, h_r$ are the bifurcation fibers and $f$, $g$, $\varphi_1, \ldots, \varphi_r$ are generic. Thus, according to Proposition~\ref{mil = intmil} and  \eqref{eq:Milnor along a divisor},
$$
\mu_0(\F) = \mu_0(\F, \B)= \sum_{i=1}^r\mu_0(\F,(h_i)) + \mu_0(\F,(f))+\mu_0(\F,(g)) - \sum_{i=1}^r\mu_0(\F,(\varphi_i)) -1.
$$
By Remark \ref{IntMil-GSV} we have that $\mu_0(\F,(h_i))=i_0(f,g)+\mu(h_i)$, $\mu_0(\F,(f))=i_0(f,g)+\mu(f)$, $\mu_0(\F,(g))=i_0(f,g)+\mu(g)$ and $\mu_0(\F,(\varphi_i))=i_0(f,g)+\mu_{gen}$ , therefore
$$
\mu_0(\F)= 2 i_0(f,g) +\mu_0(f)+\mu_0(g) -1 + \sum_{i=1}^r (\mu_0(h_i)-\mu_{gen})= \mu_0 (fg) + \sum_{i=1}^r (\mu_0(h_i)-\mu_{gen})
$$
and we conclude the proof. \qed

\subsection{Semitame functions}
We recall that a meromorphic function $(f:g): (\C^2,0)\to \mathbb{P}^1$ is called \textbf{semitame} in \cite{Bodin-Pichon} and \cite{Sa} if the only possible bifurcation fibers are $f$ and $g$. A direct consequence of the Bifurcation Formula is the following.

\begin{corollary}
If $f$ and $g$ are coprime and all fibers of the pencil are reduced, then $\mu(f,g)\geq \mu_0(fg)$ with equality if and only if the germ $(f:g)$ is semitame.
\end{corollary}

Observe that, in general, a pencil $\mathcal{F}_{f/g}$ may have three non-reduced fibers, including those defined by $f$ and $g$. In this case, it would not be semitame. Nevertheless $\mu(f, g)=\mu_0(fg)=\infty$.

\subsection{Universal unfoldings of meromorphic functions}
Suwa \cite{Suwa} studied unfoldings of germs of meromorphic functions. He showed that for a germ of a meromorphic function $f/g$, universal unfoldings exist under a finiteness condition. This condition is related to the finiteness of the number
\[\nu(f,g):=\dim_{\C}\frac{\langle f,g\rangle}{\langle f_x g-g_x f, f_y g-g_y f\rangle}<\infty.\]
This number is also the dimension of the parameter space of the semi-universal unfolding in the meromorphic context.
\par As a consequence of the Bifurcation Formula (Theorem \ref{Bifurcation}) and since $\nu(f,g)=\mu(f,g)-i_0(f,g)$, we obtain the following corollary which answers a question posed by Szawlowski \cite[Section 5]{Sa}.

\begin{corollary}
If $f$ and $g$ are coprime and all fibers of the pencil are reduced, then
\[\nu(f,g)=\mu_0(fg)-i_0(f,g)+\sum_{[\alpha:\beta]\in\mathcal{B}(f,g)^{*}}\left(\mu_0(\alpha f +\beta g)-\mu_{gen}(f,g)\right).\]
\end{corollary}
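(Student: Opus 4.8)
The plan is to combine the Bifurcation Formula (Theorem~\ref{Bifurcation}) with the purely algebraic identity $\nu(f,g)=\mu(f,g)-i_0(f,g)$ relating the three invariants involved. Since all fibers of the pencil are assumed reduced, Lemma~\ref{lemma_red} guarantees that $\omega=fdg-gdf$ has an isolated singularity at the origin and that $\mu(f,g)=\mu_0(\F_{f/g})<\infty$; this places us exactly in the hypotheses of Theorem~\ref{Bifurcation}, so every quantity appearing below is finite and the substitution to come is legitimate.

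First I would establish the identity $\nu(f,g)=\mu(f,g)-i_0(f,g)$. Writing $J=\langle f_xg-g_xf,\,f_yg-g_yf\rangle$, both generators of $J$ lie in the ideal $\langle f,g\rangle$, since $f_xg-g_xf=f_x\cdot g+(-g_x)\cdot f$ and $f_yg-g_yf=f_y\cdot g+(-g_y)\cdot f$. Hence $J\subseteq\langle f,g\rangle$, and there is a short exact sequence of finite-dimensional $\C$-vector spaces
\[
0\longrightarrow \frac{\langle f,g\rangle}{J}\longrightarrow \frac{\mathcal O_{\C^2,0}}{J}\longrightarrow \frac{\mathcal O_{\C^2,0}}{\langle f,g\rangle}\longrightarrow 0.
\]
Taking dimensions and recalling that $\mu(f,g)=\dim_\C \mathcal O_{\C^2,0}/J$ while $i_0(f,g)=\dim_\C \mathcal O_{\C^2,0}/\langle f,g\rangle$, additivity of dimension along the sequence yields $\nu(f,g)=\dim_\C \langle f,g\rangle/J=\mu(f,g)-i_0(f,g)$.

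Finally I would substitute the Bifurcation Formula into this identity. Replacing $\mu(f,g)$ by the right-hand side of~\eqref{Bif} gives
\[
\nu(f,g)=\mu_0(fg)-i_0(f,g)+\sum_{[\alpha:\beta]\in\mathcal{B}(f,g)^{*}}\bigl(\mu_0(\alpha f+\beta g)-\mu_{gen}(f,g)\bigr),
\]
which is precisely the claimed formula. The only substantive input is the Bifurcation Formula itself, already proved; the algebraic identity is routine, and I expect no genuine obstacle beyond checking the inclusion $J\subseteq\langle f,g\rangle$ that makes the exact sequence well defined and confirming, via the reducedness hypothesis, that we remain in the finite regime where Theorem~\ref{Bifurcation} applies.
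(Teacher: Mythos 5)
Your proposal is correct and follows exactly the route the paper takes: combine the Bifurcation Formula with the identity $\nu(f,g)=\mu(f,g)-i_0(f,g)$, which the paper simply asserts and you verify via the inclusion $\langle f_xg-g_xf,\,f_yg-g_yf\rangle\subseteq\langle f,g\rangle$ and additivity of dimension along the resulting short exact sequence. This is the same argument, with the routine algebraic step spelled out rather than cited.
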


\end{document}